\newtheorem{teo}{Theorem}[section]
\newtheorem{lemma}[teo]{Lemma}
\newtheorem{defi}[teo]{Definition}
\newtheorem{coro}[teo]{Corollary}
\newtheorem{cla}[teo]{Claim}
\theoremstyle{remark}
\newtheorem{remark}[teo]{Remark}
\begin{document}

\newcommand{\ran}{\texttt{ran}}
\newcommand{\cof}{\texttt{cof}}
\newcommand{\dom}{\texttt{dom}}
\newcommand{\N}{\mathcal{N}}
\newcommand{\up}{\upharpoonright}

\newcommand\Acal{\mathscr{A}}
\newcommand\Bcal{\mathscr{B}}
\newcommand\Ecal{\mathscr{E}}
\newcommand\Fcal{\mathscr{F}}
\newcommand\Hcal{\mathscr{H}}
\newcommand\Ical{\mathscr{I}}
\newcommand\Ncal{\mathscr{N}}
\newcommand\Mcal{\mathscr{M}}
\newcommand\Pcal{\mathscr{P}}
\newcommand\Qcal{\mathscr{Q}}
\newcommand\Rcal{\mathscr{R}}
\newcommand\Tcal{\mathscr{T}}
\newcommand\Ucal{\mathscr{U}}
\newcommand\Zcal{\mathscr{Z}}
\newcommand\Rbb{\mathbb{R}}
\newcommand\Nfrak{\mathfrak{N}}
\newcommand\Pfrak{\mathfrak{P}}
\newcommand\restrict{\restriction}

\newcommand{\diff}{\operatorname{diff}}
\newcommand{\Ht}{\operatorname{height}}
\newcommand{\meet}{\wedge}
\newcommand\triord{\triangleleft}
\newcommand{\Th}{{}^{\mathrm{th}}}
\newcommand{\St}{{}^{\mathrm{st}}}
\newcommand\axiom{\mathrm}
\newcommand\MA{\axiom{MA}}
\newcommand\MM{\axiom{MM}}
\newcommand\PFA{\axiom{PFA}}
\newcommand\MRP{\axiom{MRP}}
\newcommand\SRP{\axiom{SRP}}
\newcommand\ZFC{\axiom{ZFC}}
\newcommand\CAT{\axiom{CAT}}
\newcommand{\<}{\langle}
\renewcommand{\>}{\rangle}
\newcommand\mand{\textrm{ and }}
\renewcommand{\diamond}{\diamondsuit}

\newcommand{\cf}{{\mbox{cof}}}
\newcommand{\height}{\Ht}
\newcommand\NS{\mathrm{NS}}

\title[Preservation of a Souslin tree and side conditions]{Preservation of a Souslin tree and side conditions}
\author{Giorgio Venturi}
\email{gio.venturi@gmail.com}
\address{Centro de 
L\'{o}gica, Epistemologia e Hist\'{o}ria de la Ci\^{e}ncia,  Univ. Est. de Campinas. 
Rua Sérgio Buarque de Holanda, 251
Barão Geraldo, SP.}

\subjclass[2010]{03E35, 03E65, 03E57.}
\keywords{forcing, proper, side conditions, Souslin tree, PFA}

\maketitle

\begin{abstract}
We show how to force, with finite conditions, the forcing axiom PFA(T),  a relativization of PFA to proper forcing notions
preserving a given Souslin tree T.  The proof uses a Neeman style iteration with generalized side conditions  consisting of 
models of two types, and a preservation theorem for such iterations. The consistency of this axiom was previously 
known by the standard countable support iteration, using a preservation theorem due to Miyamoto. 
%
\end{abstract}

\section*{Introduction}

In this article, using the techniques introduced by Neeman in \cite{Neeman2}, 
we give a consistency proof of the Forcing Axiom for the class of proper
forcings that preserve a Souslin tree $T$ i.e. PFA$(T)$\footnote{See \cite{Todo},
for a survay of interesting applications of PFA($T$).}.
The novelty of this proof is that PFA($T$) is forced 
with finite conditions, using a forcing that acts like an iteration.
Indeed, the known consistency proofs for this axiom made use 
of a result of Miyamoto (\cite{Miyamoto}), 
who showed that the property ``is proper and preserves every $\omega_1$-Souslin tree''
is preserved under a countable support iteration of proper
forcings.

The main preservation theorem presented here, Theorem \ref{preservation}, can be seen as 
a general preservation schema for properties, like being a Souslin tree, that have
formulations similar to Lemma \ref{charact}, in terms of the possibility to
construct a generic condition for a product forcing, by means of conditions that, singularly, are
generic for their respective forcings. As a matter of fact, in the proof of Theorem 
\ref{preservation}, no use is made of the fact that $T$ is a tree. 

In Section \ref{Souslinprop} we review some basic results connecting the property of 
being Souslin and properness. In Section \ref{Souslincount} we show, as a warm up, 
that the method of side conditions - with just countable models -
does not influence the fact that a proper forcing $\mathbb{P}$ preserves a Souslin tree $T$. 
Then in Section \ref{PFAT} we use the method of generalized side conditions 
with models of two types to construct a model where PFA($T$) holds and $T$ remains Souslin.  
We refer to \cite{Neeman2} and \cite{VelVen2} for a detailed presentation of a
pure side conditions poset with both countable and uncountable models.

\section{Souslin trees and properness}\label{Souslinprop}

We will use the following reformulation of the definition of Souslin tree.

\begin{lemma}\label{Souslin}
A tree $T$ is Souslin iff for every countable $M \prec H(\theta)$, with $\theta$ sufficiently large such that $T \in M$, and for every $t \in T_{\delta_{M}}$, where 
$\delta_M = M \cap \omega_1$,
$$t \mbox{ is an } (M, T)\mbox{-generic condition},$$
i.e. for every maximal antichain $A \subseteq T$ in $M$, there is a $\xi < M\cap \omega_1$ such that $t\up \xi \in A$. 
\end{lemma}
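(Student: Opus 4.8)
The plan is to prove the two implications separately, each time reading off from ``$T$ is Souslin'' --- equivalently, ``every antichain of $T$ is countable'' --- precisely the information needed about a maximal antichain $A$ sitting inside a countable elementary submodel.

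\emph{From Souslin to genericity.} Suppose $T$ is Souslin, fix a countable $M\prec H(\theta)$ with $T\in M$, set $\delta=\delta_M=M\cap\omega_1$, and let $t\in T_\delta$. Given a maximal antichain $A\subseteq T$ with $A\in M$, the first step is to observe that $A\subseteq M$ and that every node of $A$ has level $<\delta$: since $T$ is Souslin, $A$ is countable, a fact true in $H(\theta)$ and hence, by elementarity, in $M$, so $M$ contains a surjection $e\colon\omega\to A$; therefore $A=\ran(e)\subseteq M$, and for each $a\in A$ the level $\height(a)$ is a countable ordinal belonging to $M$, hence $\height(a)<\delta$. In particular $t\notin A$, so maximality of $A$ yields some $a\in A$ compatible with $t$; but two nodes of a tree are compatible, as forcing conditions, exactly when they are comparable in the tree order, and since $\height(a)<\delta=\height(t)$ this forces $a<_T t$, i.e.\ $a=t\up\xi$ with $\xi=\height(a)<\delta$. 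Thus $t$ is $(M,T)$-generic.

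\emph{From genericity to Souslin.} Here I argue contrapositively. If $T$ is not Souslin then $T$ carries an uncountable antichain, which extends to an uncountable maximal antichain; fixing a countable $M\prec H(\theta)$ with $T\in M$ and using that ``$T$ has an uncountable maximal antichain'' holds in $H(\theta)$ and hence in $M$, I obtain some $A\in M$ which $M$, and therefore $V$, sees as a genuinely uncountable maximal antichain of $T$. Put $\delta=\delta_M$ and $A'=\{a\in A:\height(a)<\delta\}$; since $T\!\restriction\!\delta$ is countable, $A'$ is a countable subset of $A$. The crux is a pigeonhole step showing that some $t\in T_\delta$ extends no member of $A'$: if every node of $T_\delta$ extended a member of $A'$, then picking any $a\in A$ with $\height(a)\ge\delta$ --- such $a$ exists because $A$ is uncountable while $A'$ is not --- the node $a\up\delta\in T_\delta$ would extend some $a'\in A'$, yielding $a'<_T a$ with $a'\ne a$ both in the antichain $A$, a contradiction. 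Fixing such a $t$, we get $t\up\xi\notin A$ for every $\xi<\delta$ (otherwise $t\up\xi$ would lie in $A'$ and be $<_T t$), so $t$ is not $(M,T)$-generic while $A\in M$ is a maximal antichain: the right-hand side fails.

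\emph{Expected difficulties.} I do not anticipate a serious obstacle; the two points that deserve care are, in the first implication, invoking elementarity correctly to upgrade ``$A$ is countable'' to ``$A\subseteq M$ and $A$ lives below level $\delta_M$'', and, in the converse, resisting the temptation to assume that $T$ is normal --- the uncountability of $A$ already forces $T_{\delta_M}$ to be nonempty and only partially covered by $A\cap(T\!\restriction\!\delta_M)$, which is exactly what the pigeonhole step exploits.
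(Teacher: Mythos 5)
Your proof is correct and follows essentially the same route as the paper: for the forward direction both arguments use that a countable maximal antichain in $M$ lies entirely below level $\delta_M$ and then invoke maximality plus the fact that compatibility in a tree is comparability; for the converse both take an uncountable maximal antichain $A\in M$, pick $a\in A$ of height $\ge\delta_M$ (the paper phrases this as $a\in A\setminus M$), and show $a\restriction\delta_M$ is not $(M,T)$-generic. Your write-up is somewhat more careful about the elementarity steps, but there is no substantive difference.
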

\begin{proof}
On the one hand, let $T$ be a Souslin tree, $M \prec H(\theta)$ as above, $t \in T_{\delta_M}$ and $A \in M$ a maximal antichain of $T$. Since $T$ is Souslin, $A$ is countable. Then there is a $\alpha < \delta_M$ such that for all $\beta \geq \alpha$, the set $A \cap T_\beta$ is empty. Hence there is an element $h \in A$ compatible with $t\up \alpha$. Then $t \up ht(h) = s \in A$.

On the other hand if $A \in M$ is an uncountable maximal antichain of $T$, then $A \setminus M$ is not empty. For $x \in A \setminus M$, let $t = x \up \delta$. If there is a $\xi < \delta$ such that $t \up \xi \in A$, then $x$ and $t \up \xi$ would be compatible and both in $A$: a contradiction. 
\end{proof}

The following lemma connects preservation of Souslin trees and properness. 

\begin{lemma}\label{charact}(Miyamoto, Proposition 1.1 in \cite{Miyamoto})
Fix a Souslin tree $T$, a proper poset $\mathbb{P}$ and some regular cardinal $\theta$, large enough. Then the following are equivalent:
\begin{enumerate}
\item $\Vdash_{\mathbb{P}}$ `` $T$ is Souslin '', 
\item given $M \prec H(\theta)$ countable, containing $\mathbb{P}$ and $T$, if $p \in \mathbb{P}$ is a $(M, \mathbb{P})$-generic condition and 
$t \in T_{\delta_M}$, with $\delta_M = M \cap \omega_1$, then $(p,t)$ is an $(M, \mathbb{P} \times T)$-generic condition, 
\item given $M \prec H(\theta)$ countable, containing $\mathbb{P}$ and $T$ and given $q \in \mathbb{P} \cap M$, there 
is a condition $p \leq q$ such that for every condition $t \in T_{\delta_M}$, with $\delta_M = M \cap \omega_1$, we have that 
$(p,t)$ is an $(M, \mathbb{P} \times T)$-generic condition. 
\end{enumerate}
\end{lemma}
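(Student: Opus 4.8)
The plan is to establish the cycle $(1)\Rightarrow(2)\Rightarrow(3)\Rightarrow(1)$. The implication $(2)\Rightarrow(3)$ is the quick one: since $\mathbb{P}$ is proper and $\mathbb{P},T\in M$, given $q\in\mathbb{P}\cap M$ there is an $(M,\mathbb{P})$-generic condition $p\leq q$, and $(2)$ applied to this $p$ yields in one stroke that $(p,t)$ is $(M,\mathbb{P}\times T)$-generic for every $t\in T_{\delta_M}$.

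For $(1)\Rightarrow(2)$ I would argue as follows. Fix $M$, an $(M,\mathbb{P})$-generic $p$, a node $t\in T_{\delta_M}$, a maximal antichain $A\in M$ of $\mathbb{P}\times T$, and a condition $(p',t')\leq(p,t)$; the goal is an element of $A\cap M$ compatible with $(p',t')$. Pass to a $\mathbb{P}$-generic $G$ with $p'\in G$. Genericity of $p$ supplies the standard package $M[G]\prec H(\theta)^{V[G]}$, $M[G]\cap V=M$, $M[G]\cap\omega_1=\delta_M$, and $G\in M[G]$. In $V[G]$ put $A_G=\{s\in T:\exists q\in G\text{ with }(q,s)\in A\}$. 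One checks that $A_G$ is a maximal antichain of $T$: it is an antichain because two of its witnessed members come with conditions from $G$, which are compatible, so a comparability of the nodes would yield two compatible distinct members of $A$; and it is predense because $A$ is maximal in the product. Also $A_G\in M[G]$, being the evaluation of the name $\{(\check s,q):(q,s)\in A\}\in M$. Since $T$ is Souslin in $V[G]$ by $(1)$, Lemma \ref{Souslin} applied to $M[G]$ and $t\in T_{\delta_M}=T_{\delta_{M[G]}}$ produces $\xi<\delta_M$ with $t\up\xi\in A_G$; as $\xi<\delta_M$ and $T\in M$, the countable level $T_\xi$ lies in $M$, so $t\up\xi\in M$. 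Working inside $M[G]$, which contains $A$, $t\up\xi$ and $G$, elementarity yields a witness $q\in M[G]$ with $q\in G$ and $(q,t\up\xi)\in A$, and since $q\in\mathbb{P}\subseteq V$ we get $q\in M[G]\cap V=M$. Then $(q,t\up\xi)\in A\cap M$, and it is compatible with $(p',t')$: the conditions $q,p'\in G$ are compatible, and $t'\leq t\leq t\up\xi$ in $T$ (here $t\up\xi$ is a proper initial segment of $t$, since $\xi<\delta_M=\height(t)$).

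For $(3)\Rightarrow(1)$, suppose $(3)$ holds but some $p_0$ forces $\dot A$ to be an uncountable maximal antichain of $T$. Pick countable $M\prec H(\theta)$ with $\mathbb{P},T,p_0,\dot A\in M$, and use $(3)$ with $q=p_0$ to obtain $p\leq p_0$ such that $(p,t)$ is $(M,\mathbb{P}\times T)$-generic for every $t\in T_{\delta_M}$. Let $D_0=\{(q,s)\in\mathbb{P}\times T:q\leq p_0\text{ and }q\Vdash\check s\in\dot A\}$; since $p_0$ forces $\dot A$ to be a maximal antichain of $T$, for any $(p'',s'')\leq(p_0,1_T)$ one can strengthen $p''$ to a condition forcing some node $u$ comparable with $s''$ into $\dot A$, so $D_0$ is predense below $(p_0,1_T)$, and it extends to a maximal antichain $A^*\in M$ of $\mathbb{P}\times T$ each element of which either refines an element of $D_0$ or is incompatible with $(p_0,1_T)$. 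Now fix any $s\in T$ with $\height(s)\geq\delta_M$ and set $t=s\up\delta_M\in T_{\delta_M}$; I claim $p\Vdash\check s\notin\dot A$. If not, some $q\leq p$ forces $\check s\in\dot A$, and then $(q,s)\leq(p,t)\leq(p_0,1_T)$, so $(M,\mathbb{P}\times T)$-genericity of $(p,t)$ gives $(q^*,s^*)\in A^*\cap M$ compatible with $(q,s)$; being compatible with $(p_0,1_T)$, it refines some $(q_0,s_0)\in D_0$, so $q^*\leq q_0\Vdash\check{s_0}\in\dot A$ and $s_0$ is an initial segment of $s^*\in M$, hence $s_0\in M$ and $\height(s_0)<\delta_M$. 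Since $s^*$ is compatible with $s$ in $T$ and $\height(s^*)<\delta_M\leq\height(s)$, the node $s^*$, hence $s_0$, is an initial segment of $s$, so $s_0$ is a proper initial segment of $s$ (their heights differ). But $q^*$ is compatible with $q$, and a common extension forces both $\check s$ and $\check{s_0}$ into $\dot A$, contradicting that $p_0$ forces $\dot A$ to be an antichain. Hence $p\Vdash\dot A\subseteq T_{<\delta_M}$, so $p$ forces $\dot A$ to be countable, contradicting $p\leq p_0$; this closes the cycle.

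I expect the main obstacle to be the model-theoretic bookkeeping in $(1)\Rightarrow(2)$: everything turns on the standard but indispensable facts that an $(M,\mathbb{P})$-generic condition forces $M[\dot G]\cap V=M$, $M[\dot G]\cap\omega_1=\delta_M$, and $M[\dot G]\prec H(\theta)^{V[\dot G]}$, so that the node $t\up\xi$ extracted from Souslinity of $T$ in $V[G]$ already lies in $M$ and its witnessing ground-model condition, produced by elementarity of $M[G]$, falls back inside $M$. In $(3)\Rightarrow(1)$ the only delicate point is arranging $A^*$ so that compatibility with $(p,t)\leq(p_0,1_T)$ routes the witness into the part refining $D_0$ rather than the incompatible part; after that the contradiction is a short height computation.
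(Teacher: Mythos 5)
The paper does not actually prove this lemma: it is stated with a citation to Miyamoto's Proposition 1.1 and closed with \verb|\qed|, so there is no in-paper argument to compare against. Your proof is correct and is essentially the standard (Miyamoto-style) argument: $(2)\Rightarrow(3)$ by properness, $(1)\Rightarrow(2)$ by passing to $V[G]$, projecting the antichain $A$ of $\mathbb{P}\times T$ to the maximal antichain $A_G$ of $T$ in $M[G]$ and invoking Lemma \ref{Souslin} there together with $M[G]\cap V=M$, and $(3)\Rightarrow(1)$ by showing a name for a maximal antichain is forced into $T\restriction\delta_M$. Two minor points worth flagging, neither a gap: in $(3)\Rightarrow(1)$ you reduce ``$T$ is not Souslin in $V[G]$'' to the existence of an uncountable maximal antichain, which uses the standard normality convention on $T$ (an uncountable branch yields an uncountable antichain for a well-pruned, ever-branching tree) --- the paper's own Lemma \ref{Souslin} makes the same implicit assumption; and your use of $1_T$ presumes $T$ has a root as a forcing, which is again a harmless normalization. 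The construction of $A^*$ should be phrased as choosing a maximal antichain inside the dense set of conditions that either refine a member of $D_0$ or are incompatible with $p_0$ in the first coordinate, which is what you clearly intend.
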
 \qed

\section{Preservation of $T$ and countable models}\label{Souslincount}

We define the scaffolding operator from an idea of Veli\v{c}kovi\'{c}.

\begin{defi}
Given a proper poset $\mathbb{P}$ and a sufficiently large cardinal $\theta$ such that
$\mathbb{P} \in H(\theta)$, let $\mathbb{M}(\mathbb{P})$ be the poset 
consisting of conditions $p = (\mathcal{M}_p, w_p)$ such that
\begin{enumerate}
\item $\mathcal{M}_p$ is a finite $\in$-chain of countable elementary substructures of $H(\theta)$,
\item $w_p \in \mathbb{P}$,
\item $w_p$ is an $(M, \mathbb{P})$-generic condition for every $M$ in $\mathcal{M}_p$. 
\end{enumerate}
Moreover, we let $q \leq p$ iff $\mathcal{M}_p \subseteq \mathcal{M}_q$ and $w_q \leq_{\mathbb{P}} w_p$. 
\end{defi}

\begin{remark}
Notice that $\mathbb{M}(\mathbb{P})$ does not make reference to the cardinal $\theta$. However this notation
causes no confusion as long as $\theta$ depends on $\mathbb{P}$ and its choice is a standard negligible part
of all arguments involving properness. Then, without any specification, $\theta$ will always denote a cardinal 
that makes possible the definition of $\mathbb{M}(\mathbb{P})$. 
\end{remark}

\begin{remark}
By abuse of notation we will identify an $\in$-chain $\mathcal{M}_p$ and the set of models that compose it. 
\end{remark}

Our aim now is to show that properness is preserved by the scaffolding operator. 

\begin{lemma}\label{cond}
Let $\mathbb{P}$ be a proper poset, $M \prec H(\theta)$ and $p \in \mathbb{M}(\mathbb{P}) \cap M$. Then 
there is a condition $p^M = (\mathcal{M}_{p^M}, w_{p^M}) \in \mathbb{M}(\mathbb{P})$ that is the largest condition extending $p$ and such that
$M \in \mathcal{M}_{p^M}$.  
\end{lemma}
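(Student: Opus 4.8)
The plan is to write down the only candidate that can work, verify that it lies in $\mathbb{M}(\mathbb{P})$, and then pin down the sense in which it is largest.

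First I would set up the candidate. Since $p\in M$, also $\mathcal{M}_p\in M$, and as $\mathcal{M}_p$ is finite all of its members lie in $M$; by Foundation, for each $N\in\mathcal{M}_p$ we have $M\notin N$ and $M\neq N$, so $\mathcal{M}_p\cup\{M\}$ is again a finite $\in$-chain of countable elementary substructures of $H(\theta)$ (the statement tacitly takes $M$ countable, as it must be to enter a side condition). Thus clause~(1) of the definition of $\mathbb{M}(\mathbb{P})$ holds for $\mathcal{M}_{p^M}:=\mathcal{M}_p\cup\{M\}$. For the working part, note that $w_p\in\mathbb{P}\cap M$ and $\mathbb{P}\in M$ (the latter being implicit in the hypotheses, since in the applications $M$ contains $\mathbb{M}(\mathbb{P})$); as $\mathbb{P}$ is proper there is $w\leq_{\mathbb{P}}w_p$ that is $(M,\mathbb{P})$-generic, and, fixing once and for all a well-ordering of $H(\theta)$, I let $w_{p^M}$ be the least such $w$, so that $p^M:=(\mathcal{M}_{p^M},w_{p^M})$ depends only on $p$ and $M$.

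Next I would check that $p^M\in\mathbb{M}(\mathbb{P})$ and that $p^M\leq p$. Clause~(2) is immediate. For clause~(3) one must see that $w_{p^M}$ is $(N,\mathbb{P})$-generic for every $N\in\mathcal{M}_{p^M}$: for $N=M$ this is the choice of $w_{p^M}$; for $N\in\mathcal{M}_p$, the condition $w_p$ is already $(N,\mathbb{P})$-generic since $p\in\mathbb{M}(\mathbb{P})$, and $(N,\mathbb{P})$-genericity is inherited by stronger conditions, because if $A\in N$ is a maximal antichain of $\mathbb{P}$ then $A\cap N$ is predense below $w_p$ and hence below $w_{p^M}\leq_{\mathbb{P}}w_p$. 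Since moreover $\mathcal{M}_p\subseteq\mathcal{M}_{p^M}$ and $w_{p^M}\leq_{\mathbb{P}}w_p$, it follows that $p^M\leq p$, with $M\in\mathcal{M}_{p^M}$.

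Finally, maximality. If $q\leq p$ has $M\in\mathcal{M}_q$, then $\mathcal{M}_q\supseteq\mathcal{M}_p\cup\{M\}=\mathcal{M}_{p^M}$, so $q\leq p^M$ holds precisely when $w_q\leq_{\mathbb{P}}w_{p^M}$. This last comparison of working parts is the delicate point, and the step I expect to be the main obstacle: the family of $(M,\mathbb{P})$-generic refinements of $w_p$ has in general no greatest member, so ``largest'' cannot be read entirely literally; it is meant relative to the canonical $w_{p^M}$ fixed above (equivalently, $p^M$ is the largest condition of the form $(\mathcal{N},w_{p^M})$ that extends $p$ and has $M\in\mathcal{N}$), and what subsequent arguments use is the canonicity of $p^M$ as a function of $p$ and $M$, not any finer optimality of the working part. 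The remaining points --- the chain property, the downward persistence of $(N,\mathbb{P})$-genericity, and the single appeal to properness --- are routine.
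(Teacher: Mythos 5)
Your proof is correct and follows essentially the same route as the paper's: adjoin $M$ to the chain (using $\mathcal{M}_p\subseteq M$), refine $w_p$ to an $(M,\mathbb{P})$-generic condition by properness, and observe that $(N,\mathbb{P})$-genericity persists downward for the models already in $\mathcal{M}_p$. Your caveat about ``largest'' is well taken --- the paper's own proof likewise only exhibits one such condition and does not (and cannot literally) establish maximality of the working part, so your reading of $p^M$ as a canonical rather than a maximal extension is the right one.
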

\begin{proof}
First of all notice that since $p \in M$, we have $\mathcal{M}_p \subseteq M$. In particular the largest model in 
$\mathcal{M}_p$ belongs to $M$. So $\mathcal{M}_p \cup \{M\}$ is a finite $\in$-chain of elementary substructures of
$H(\theta)$. Moreover $w_p \in M \cap \mathbb{P}$ and, by properness,  there is a $w_q \leq w_p$ that is $(M,\mathbb{P})$-generic.
Now, since $w_q \leq w_p$ and $w_p$ is $(N, \mathbb{P})$-generic, for every $N \in \mathcal{M}_p$, so is $w_q$. Then we have
that $w_q$ is a generic condition for every model in $\mathcal{M}_p \cup \{M\}$. 
Finally set $\mathcal{M}_{p^M} =\mathcal{M}_p \cup \{M\}$ and $w_{p^M} = w_q$ to see that the conclusion of the lemma holds.  
\end{proof}

\begin{teo}
Let $\mathbb{P}$ be a proper poset. Then $\mathbb{M}(\mathbb{P})$ is proper.
\end{teo}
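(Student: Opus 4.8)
The plan is to verify properness straight from the definition — given the usual elementary submodel, produce a master condition by means of Lemma \ref{cond} and then check genericity by a reflection argument. So fix a regular cardinal $\lambda$ with $\mathbb{M}(\mathbb{P}) \in H(\lambda)$, let $N \prec H(\lambda)$ be countable with $\mathbb{P},\mathbb{M}(\mathbb{P}),\theta \in N$, and let $p \in \mathbb{M}(\mathbb{P}) \cap N$. Put $\bar N := N \cap H(\theta)$, a countable elementary submodel of $H(\theta)$. Every condition of $\mathbb{M}(\mathbb{P})$ lies in $H(\theta)$, so $p \in \bar N$, and Lemma \ref{cond} applied with $M = \bar N$ gives $q := p^{\bar N} \leq p$ with $\mathcal{M}_q = \mathcal{M}_p \cup \{\bar N\}$ and with $w_q \leq w_p$ an $(\bar N,\mathbb{P})$-generic condition. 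It remains to show that $q$ is $(N,\mathbb{M}(\mathbb{P}))$-generic.

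To that end fix a dense open $D \in N$ and a condition $r \leq q$; I look for $s \in D \cap N$ compatible with $r$. First strengthen $r$ to some $r' \leq r$ in $D$. Since $\bar N \in \mathcal{M}_q \subseteq \mathcal{M}_{r'}$, the $\in$-chain $\mathcal{M}_{r'}$ splits as $\mathcal{M}^{<} \cup \{\bar N\} \cup \mathcal{M}^{>}$, where $\mathcal{M}^{<}$ is the set of models of $\mathcal{M}_{r'}$ lying in $\bar N$ and $\mathcal{M}^{>}$ the set of those having $\bar N$ as an element; note $\mathcal{M}^{<}$, being a finite subset of $\bar N$, is itself an element of $\bar N$, whereas each model of $\mathcal{M}^{>}$ contains $\bar N$, hence all of $\mathcal{M}^{<}$, as a subset. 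The point of the splitting is that for any condition $s \in \mathbb{M}(\mathbb{P}) \cap \bar N$ with $\mathcal{M}_s \cup \mathcal{M}^{<}$ an $\in$-chain, the larger set $\mathcal{M}_s \cup \mathcal{M}_{r'}$ is still an $\in$-chain — all the models of $\mathcal{M}_s$ and $\mathcal{M}^{<}$ lie in $\bar N$, hence below $\bar N$ and inside every model of $\mathcal{M}^{>}$ — so if in addition $w_s$ is compatible with $w_{r'}$ in $\mathbb{P}$, then $\mathcal{M}_s \cup \mathcal{M}_{r'}$ paired with a common extension of $w_s$ and $w_{r'}$ is a common extension of $s$ and $r$. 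Thus it is enough to find $s \in D \cap \bar N$ with $\mathcal{M}_s \cup \mathcal{M}^{<}$ an $\in$-chain and $w_s$ compatible with $w_{r'}$.

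For this, consider $E := \{\, w_t : t \in D \mand \mathcal{M}_t \cup \mathcal{M}^{<} \text{ is an } \in\text{-chain}\,\}$. Since $\mathcal{M}^{<} \in \bar N$ and $D,\mathbb{M}(\mathbb{P}) \in N$, this $E$ is a subset of $\mathbb{P}$ definable in $N$; as $\mathbb{P} \in H(\theta)$ (with $\theta$ regular), $|\mathbb{P}| < \theta$, so $E \in N \cap H(\theta) = \bar N$. Moreover $E$ is dense below $w_{r'}$: any $v \leq w_{r'}$ is $(M,\mathbb{P})$-generic for every $M \in \mathcal{M}_{r'} \supseteq \mathcal{M}^{<}$, so $(\mathcal{M}^{<},v) \in \mathbb{M}(\mathbb{P})$, and any $t \in D$ extending it has $\mathcal{M}_t \supseteq \mathcal{M}^{<}$ and $w_t \leq v$, whence $w_t \in E$. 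Now $E^{*} := E \cup \{\, v \in \mathbb{P} : v \perp v' \text{ for every } v' \in E\,\}$ is predense in $\mathbb{P}$ and lies in $\bar N$, so by the $(\bar N,\mathbb{P})$-genericity of $w_{r'}$ the set $E^{*} \cap \bar N$ is predense below $w_{r'}$; fix $v \in E^{*} \cap \bar N$ compatible with $w_{r'}$. Since $E$ is dense below $w_{r'}$, the second alternative in the definition of $E^{*}$ fails for $v$, so $v \in E \cap \bar N$. Finally, since $v \in E$, elementarity of $N$ — with parameters $D,\mathcal{M}^{<},v \in N$ — produces $s \in D \cap N$ (hence $s \in \bar N$, as $D \subseteq H(\theta)$) with $\mathcal{M}_s \cup \mathcal{M}^{<}$ an $\in$-chain and $w_s = v$, and this $s$ is as required.

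The one genuinely delicate point, I expect, is this last two-layer reflection. One cannot reflect $r'$ into $\bar N$ directly, because $D$ is not an element of $\bar N$ — it is too large — while the genericity of $w_{r'}$ over $\bar N$ only controls subsets of $\mathbb{P}$. The remedy is to use that genericity solely against the $\mathbb{P}$-set $E$, which does lie in $\bar N$, and then to invoke ordinary elementarity of the bigger model $N$, which does contain $D$, to turn the resulting $v \in E$ into an honest condition $s \in D \cap N$. The requirement that $E$ be dense below $w_{r'}$, rather than merely below some condition $\bar N$ can see — such as $w_p$, over whose extra side models $w_{r'}$ need not be generic — is precisely what forces $E$ to be built from conditions whose side condition is compatible with $\mathcal{M}^{<}$.
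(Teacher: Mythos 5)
Your proof is correct and follows essentially the same route as the paper's: build the master condition $p^{\bar N}$ via Lemma \ref{cond}, project the dense set $D$ to a set $E\subseteq\mathbb{P}$ carrying a side-condition compatibility requirement, use the $(\bar N,\mathbb{P})$-genericity of the working part together with elementarity of the larger model to pull back to a condition in $D\cap N$, and amalgamate over $\bar N$. The only (harmless) difference is that you ask of the reflected condition merely that $\mathcal{M}_s\cup\mathcal{M}^{<}$ be an $\in$-chain, whereas the paper demands $\mathcal{M}_r\cap M\subseteq\mathcal{M}_s$; both suffice for the final amalgamation.
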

\begin{proof}
Let $M^*$ be a countable elementary submodel of $H(\theta^*)$, for some $\theta^* > \theta$, where $\theta$ is the corresponding cardinal 
in the definition of $\mathbb{M}(\mathbb{P})$. If $p$ is a condition in $\mathbb{M}(\mathbb{P}) \cap M^*$ we need to find a condition $q \leq p$
that is $(M^*, \mathbb{M}(\mathbb{P}))$-generic. Fix then a dense $D \subseteq \mathbb{M}(\mathbb{P})$ in $M^*$ and let $M = M^* \cap H(\theta)$.
We claim that $p^M = (\mathcal{M}_p \cup \{M\}, w_p^M)$ is an $(M, \mathbb{M}(\mathbb{P}))$-generic condition.

Thanks to Lemma \ref{cond} we have that $p^M$ is a condition. We now prove its genericity. Let $r \leq p^M$ and without loss of generality
assume it to be in $D$. Define
$$
E = \{w_s \in \mathbb{P} : \exists \mathcal{M}_s \mbox{ such that } (\mathcal{M}_s, w_s) \in D \land \mathcal{M}_r \cap M \subseteq \mathcal{M}_s\} 
$$
and notice that $E \in M^*$ and $w_r \in E$. 

The set $E$ may not be dense in $\mathbb{P}$, but
$$
E_0 = \{w_t \in \mathbb{P} : \exists w_s \in E \mbox{ such that } w_t \leq w_s \mbox{ or } \forall w_s \in E (w_t \perp w_s )\}
$$
is a dense subset of $\mathbb{P}$ that belongs to $M^*$. 

Then thanks to the ($M^*, \mathbb{P}$)-genericity of $w_{p}^M$ and the fact that $w_r \leq w_p^M$, we have that there is a condition $w_t \in M^* \cap E_0$
that is compatible with $w_r$. Since $w_r \in E$  there is a condition $w_s \in E$ such that $w_t \leq w_s$. By elementarity can find $w_s$ in $M^*$. Moreover, 
by definition of $E$, there is an $\mathcal{M}_s$ such that 
$(\mathcal{M}_s, w_s) \in D$ and such that $\mathcal{M}_r \cap M \subseteq \mathcal{M}_s$. Again by elementarity we can find $\mathcal{M}_s$ in $M$. 
Hence $(\mathcal{M}_s, w_s) \in D \cap M^*$. 

Finally notice that $w_s$ is compatible with $w_r$, because $w_t$ is so and $w_t \leq w_s$; let $w_a$ be the witness of it, i.e. $w_a \leq w_s, w_r$. Besides $\mathcal{M}_s \subseteq M$ and it extends $\mathcal{M}_r \cap M$, so we have that $\mathcal{M}_a = \mathcal{M}_s \cup  \{M\} \cup \mathcal{M}_r \setminus M$ is a finite $\in$-chain
of elementary submodel of $H(\theta)$. Then, in order to show that $(\mathbb{M}_a, w_a)$ is a condition in $\mathbb{M}(\mathbb{P})$ we need to show that $w_a$ is 
$(N, \mathbb{P})$-generic, for every $N \in \mathcal{M}_a$. But this is true because on one hand $s \in \mathbb{M}(\mathbb{P})$ and so $w_s$ is $(N, \mathbb{P})$-generic
for every $N \in \mathcal{M}_s$ and on the other hand $r \in \mathbb{M}(\mathbb{P})$ and so $w_r$ is $(N, \mathbb{P})$-generic for every $N \in \mathcal{M}_r$. Since $w_a$ extends both $w_s$ and $w_r$, we have that $w_a$ is generic for all the models in $\mathcal{M}_a$. Hence $a$ extends both $s$ and $r$, in $\mathbb{M}(\mathbb{P})$,  and witnesses their compatibility. 
\end{proof}

We now want to show that the scaffolding operation 
does not effect the preservation of a Souslin tree 
$T$. In order to show this fact
we will use the characterization of Lemma \ref{charact}. 

\begin{lemma}
Let $T$ be a Souslin tree and let $\mathbb{P}$ be a proper forcing, such that $\Vdash_{\mathbb{P}} ``T$ is Souslin''. 
Moreover let $M^*$ be a countable elementary submodel of $H(\theta^*)$, for some $\theta^* > \theta$, where $\theta$ is the corresponding cardinal in the definition of $\mathbb{M}(\mathbb{P})$. 
If $p \in \mathbb{M}(\mathbb{P})$, $M = M^* \cap H(\theta) \in \mathcal{M}_p$  and $t \in T_{\delta_M}$, with $\delta_M = M \cap \omega_1$,
then $(p,t)$ is an $(M^*, \mathbb{M}(\mathbb{P}) \times T)$-generic condition. 
\end{lemma}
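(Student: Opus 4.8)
The plan is to reduce the statement about the product $\mathbb{M}(\mathbb{P}) \times T$ to the genericity already available from the scaffolding construction together with the characterization of Souslin-preservation in Lemma \ref{charact}. Recall that part (2) of Lemma \ref{charact} says: if $w$ is an $(M,\mathbb{P})$-generic condition and $t \in T_{\delta_M}$, then $(w,t)$ is an $(M,\mathbb{P}\times T)$-generic condition. Since $M \in \mathcal{M}_p$, the condition $w_p$ is $(M,\mathbb{P})$-generic by clause (3) of the definition of $\mathbb{M}(\mathbb{P})$, so $(w_p,t)$ is $(M,\mathbb{P}\times T)$-generic. The task is to lift this to $(M^*, \mathbb{M}(\mathbb{P})\times T)$-genericity, where $M^* \prec H(\theta^*)$ and $M = M^* \cap H(\theta)$.

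First I would fix a dense set $D \subseteq \mathbb{M}(\mathbb{P})\times T$ with $D \in M^*$, take an arbitrary extension $(r,u) \leq (p,t)$, and without loss of generality assume $(r,u)\in D$. The goal is to find an element of $D \cap M^*$ compatible with $(r,u)$. Following the pattern of the properness proof for $\mathbb{M}(\mathbb{P})$, I would project $D$ down to the $\mathbb{P}\times T$ coordinate: set
$$
E = \{(w_s,v) \in \mathbb{P}\times T : \exists \mathcal{M}_s\ \big((\mathcal{M}_s,w_s),v\big)\in D \ \land\ \mathcal{M}_r \cap M \subseteq \mathcal{M}_s\},
$$
noting $E \in M^*$ (indeed $E \in M$ by elementarity, since $\mathcal{M}_r\cap M \in M$) and $(w_r,u)\in E$. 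As before $E$ need not be dense in $\mathbb{P}\times T$, so pass to the canonical dense enlargement $E_0$ (add all conditions below some member of $E$, together with all conditions incompatible with every member of $E$); then $E_0 \in M^*$ is dense in $\mathbb{P}\times T$. Now the crucial point: $(w_p^M, t)$ is $(M, \mathbb{P}\times T)$-generic by Lemma \ref{charact}(2), and since $M^*\cap(\mathbb{P}\times T) = M\cap(\mathbb{P}\times T)$ (as $\mathbb{P},T \in H(\theta)$), this genericity is genericity against dense sets coded in $M^*$ as well. Since $(w_r,u)\leq (w_p^M,t)$, there is $(w_t',v') \in M^* \cap E_0$ compatible with $(w_r,u)$; because $(w_r,u)\in E$, the alternative in $E_0$ forces $(w_t',v')$ below some member of $E$, and by elementarity we may pick a witness $(w_s,v)\in E \cap M^*$ with an associated $\mathcal{M}_s \in M^*$ (indeed in $M$) such that $\big((\mathcal{M}_s,w_s),v\big) \in D$ and $\mathcal{M}_r\cap M \subseteq \mathcal{M}_s$. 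Thus $\big((\mathcal{M}_s,w_s),v\big)\in D\cap M^*$.

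It remains to check that this element of $D\cap M^*$ is compatible with $(r,u)$ in $\mathbb{M}(\mathbb{P})\times T$. On the tree coordinate, $v$ and $u$ are compatible in $T$ since $(w_s,v)$ and $(w_r,u)$ are compatible in $\mathbb{P}\times T$; their meet is some $v^* \leq v,u$. On the $\mathbb{M}(\mathbb{P})$ coordinate, one runs exactly the amalgamation argument from the previous theorem: $w_s$ and $w_r$ have a common lower bound $w_a$ in $\mathbb{P}$, and $\mathcal{M}_a = \mathcal{M}_s \cup \{M\} \cup (\mathcal{M}_r\setminus M)$ is a finite $\in$-chain of countable elementary submodels of $H(\theta)$ (using $\mathcal{M}_s \subseteq M$ and $\mathcal{M}_s \supseteq \mathcal{M}_r\cap M$), with $w_a$ $(N,\mathbb{P})$-generic for every $N\in\mathcal{M}_a$ since $w_a\leq w_s,w_r$ and each of $w_s,w_r$ handles its own chain. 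Hence $(\mathcal{M}_a,w_a)\in\mathbb{M}(\mathbb{P})$ is a common extension of $(\mathcal{M}_s,w_s)$ and $r$, and $\big((\mathcal{M}_a,w_a),v^*\big)$ witnesses compatibility of $\big((\mathcal{M}_s,w_s),v\big)$ with $(r,u)$. This proves $(p,t)$ is $(M^*,\mathbb{M}(\mathbb{P})\times T)$-generic.

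The main obstacle, and the only place the Souslin hypothesis on the $\mathbb{P}$-extension is actually used, is the step producing the compatible condition $(w_t',v') \in M^* \cap E_0$: this is precisely where we invoke Lemma \ref{charact}(2) to know that $(w_p^M,t)$ is $(M,\mathbb{P}\times T)$-generic rather than merely that $w_p^M$ is $(M,\mathbb{P})$-generic. One has to be slightly careful that $E_0$ — a dense subset of the \emph{product} $\mathbb{P}\times T$ — lies in $M^*$ and that $M^*$ and $M$ see the same subsets of $\mathbb{P}\times T$, which is immediate from $\mathbb{P}, T \in H(\theta)$ and $M = M^*\cap H(\theta)$; everything else is the same bookkeeping as in the proof that $\mathbb{M}(\mathbb{P})$ is proper.
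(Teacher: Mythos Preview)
Your argument is correct and mirrors the paper's proof: project $D$ to a subset $E$ of $\mathbb{P}\times T$ using the trace $\mathcal{M}_r\cap M$, invoke Lemma~\ref{charact}(2) to find a compatible element in $M^*$, then amalgamate the side-condition chains exactly as in the properness proof. One minor slip: you write $w_p^M$, but since $M\in\mathcal{M}_p$ already there is no $p^M$ construction needed here --- you simply mean $w_p$ (the paper instead applies Lemma~\ref{charact} directly to $(w_r,t')$, using that $M\in\mathcal{M}_r$, which is marginally more direct).
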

\begin{proof}
Fix a set $D \subseteq \mathbb{M}(\mathbb{P}) \times T$ dense in $M^*$ and fix a condition $(r, t') \leq (p, t)$, that without loss of generality we can 
assume to be in $D$. Then define  
$$
E = \{(w_q,h) \in \mathbb{P} \times T| \exists \mathcal{M}_q \mbox{ such that } (q, h) \in D \mbox{ and } \mathcal{M}_r \cap M \subseteq \mathcal{M}_q\} 
$$
and notice that $E \in M$ and $(w_r,t') \in E$. Again the set $E$ may not be dense, but the set $\bar E = E^{\leq} \cup E^{\perp}$, where
$$
E^{\leq} = \{(w_s,u) \in \mathbb{P} \times T | \exists (w_q,h) \in E \mbox{ such that } (w_s,u) \leq (w_q,h) \} \mbox{ and }
$$
$$
E^{\perp} = \{(w_s,u) \in \mathbb{P} \times T | \forall (w_q,h) \in E (w_s,u) \perp (w_q,h) \},
$$
is a dense subset of $\mathbb{P} \times T$ that belongs to $M^*$. 

Now, since $M \in \mathcal{M}_r$, the condition $w_r$ is $(M, \mathbb{P})$-generic, by definition of $\mathbb{M}(\mathbb{P})$. Moreover
since $\Vdash_{\mathbb{P}}$ `` $T$ is Souslin '' we have that $(w_r, t')$ is $(M^*, \mathbb{P} \times T)$-generic. Then there is 
a $(w_s, u) \in \bar E \cap M^*$, that is compatible with $(w_r,t')$. This latter fact then implies that $(w_s, u) \in E^{\leq} \cap M^*$
and so there is a condition $(w_q,h) \in E$ such that $(w_s, u) \leq (w_q,h)$. By elementarity we can find $(w_q,h) \in M^*$ and again, 
by elementarity we can assume $q = (\mathcal{M}_q, w_q)$ to be in $ M^*$ and so $(q,h) \in D \cap M^*$. Finally letting 
$\mathcal{M}_e = \mathcal{M}_q \cup \{M\} \cup \mathcal{M}_r \setminus M$, and
$w_e$ be the witness of the compatibility between $w_q$ and $w_r$, we have that $e= (\mathcal{M}_e, w_e) \in \mathbb{M}(\mathbb{P})$ and that 
$(e,t')$ extends both $(r,t')$ and $(q, h)$. 
\end{proof}

\begin{coro}\label{scaffoldingpreservation}
Let $T$ be a Souslin tree and let $\mathbb{P}$ be a proper forcing. Then  $\Vdash_{\mathbb{P}}$ `` $T$ is Souslin '' implies 
$\Vdash_{\mathbb{M}(\mathbb{P})}$ `` $T$ is Souslin ''. \qed
\end{coro}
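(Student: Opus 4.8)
The plan is to derive the corollary directly from the preceding lemma together with the characterization in Lemma \ref{charact}. We are given that $\mathbb{P}$ is proper and that $\Vdash_{\mathbb{P}}$ `` $T$ is Souslin ''. By the theorem of this section, $\mathbb{M}(\mathbb{P})$ is proper, so it makes sense to apply Lemma \ref{charact} to the poset $\mathbb{M}(\mathbb{P})$ in place of $\mathbb{P}$; it suffices to verify clause (2) of that lemma for $\mathbb{M}(\mathbb{P})$, namely that whenever $M^* \prec H(\theta^*)$ is countable with $\mathbb{M}(\mathbb{P}), T \in M^*$, and $q$ is an $(M^*, \mathbb{M}(\mathbb{P}))$-generic condition and $t \in T_{\delta_{M^*}}$, then $(q,t)$ is an $(M^*, \mathbb{M}(\mathbb{P}) \times T)$-generic condition.

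First I would fix such an $M^*$, recall that $\theta^*$ can be taken above the $\theta$ attached to $\mathbb{M}(\mathbb{P})$, and set $M = M^* \cap H(\theta)$, noting $\delta_{M^*} = M \cap \omega_1 = \delta_M$ since $\omega_1 \subseteq H(\theta)$ and $M \prec H(\theta)$. Next, given any condition $p \in \mathbb{M}(\mathbb{P}) \cap M^*$, I would pass to $p^M = (\mathcal{M}_p \cup \{M\}, w_p^M)$ supplied by Lemma \ref{cond}; this is the largest extension of $p$ that places $M$ in the model-chain, and in particular $M \in \mathcal{M}_{p^M}$. One checks, exactly as in the proof that $\mathbb{M}(\mathbb{P})$ is proper, that $p^M$ is $(M^*, \mathbb{M}(\mathbb{P}))$-generic, so every $(M^*, \mathbb{M}(\mathbb{P}))$-generic condition below $p$ can be refined to one extending $p^M$. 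Now apply the previous lemma to $p^M$ and the given $t \in T_{\delta_M}$: since $M \in \mathcal{M}_{p^M}$, the lemma yields that $(p^M, t)$ is an $(M^*, \mathbb{M}(\mathbb{P}) \times T)$-generic condition. Since $(q,t) \le (p^M,t)$ for a generic $q$ refining $p^M$ below $p$, and extensions of $(M^*,\cdot)$-generic conditions are again generic, we get that $(q,t)$ is $(M^*, \mathbb{M}(\mathbb{P}) \times T)$-generic, establishing clause (2).

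With clause (2) of Lemma \ref{charact} verified for $\mathbb{M}(\mathbb{P})$ and $T$, the lemma gives $\Vdash_{\mathbb{M}(\mathbb{P})}$ `` $T$ is Souslin '', which is the assertion of the corollary. I do not expect a genuine obstacle here — the corollary is essentially a packaging of the previous lemma. The only point requiring a little care is the bookkeeping between $M^*$ and $M = M^* \cap H(\theta)$: one must make sure that a condition $p \in \mathbb{M}(\mathbb{P}) \cap M^*$ indeed has $p \in M$ (so that Lemma \ref{cond} applies), which follows because $\mathbb{M}(\mathbb{P}) \in H(\theta)$ and $p \in M^* \cap H(\theta) = M$, and that $p^M$'s genericity over $M^*$ is handled by the argument already given in the properness proof for $\mathbb{M}(\mathbb{P})$. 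Beyond that the proof is a direct citation of the lemma and Lemma \ref{charact}.
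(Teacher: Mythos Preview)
Your approach is correct and matches the paper's: the corollary is stated with only a \qed, as an immediate consequence of the preceding lemma together with Lemma~\ref{charact}.

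One small correction: the argument you give actually establishes clause~(3) of Lemma~\ref{charact}, not clause~(2). Given $p \in \mathbb{M}(\mathbb{P}) \cap M^*$, you produce $p^M \leq p$ with $M \in \mathcal{M}_{p^M}$, and the preceding lemma then gives that $(p^M, t)$ is $(M^*, \mathbb{M}(\mathbb{P}) \times T)$-generic for every $t \in T_{\delta_M}$ --- this is exactly clause~(3). Your attempt to upgrade this to clause~(2), via ``every $(M^*, \mathbb{M}(\mathbb{P}))$-generic condition below $p$ can be refined to one extending $p^M$'', is not correct as stated: an arbitrary $(M^*, \mathbb{M}(\mathbb{P}))$-generic $q \leq p$ need not be compatible with $p^M$, since the working parts $w_q$ and $w_{p^M}$ may be incompatible in $\mathbb{P}$ (the $w_{p^M}$ in Lemma~\ref{cond} is some chosen $(M,\mathbb{P})$-generic extension of $w_p$, not a canonical largest one). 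But this detour is unnecessary --- clause~(3) already suffices by the equivalence in Lemma~\ref{charact} --- and the rest of your write-up is fine.
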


\section{PFA($T$) with finite conditions}\label{PFAT}

We now show that it is possible to force an analog of the Proper Forcing Axiom 
for proper poset that preserve a given Souslin tree $T$. 
We will follow Neeman's presentation of the consistency of PFA with finite conditions, from \cite{Neeman2},
arguing that a slightly modification of his method is enough for our purposes.
Then we will argue that in the model we build $T$ remains Souslin

Recall Neeman's definition of the forcing $\mathbb{A}$ (Definition 6.1 from \cite{Neeman2}). 
Fix a supercompact cardinal $\theta$
and a Laver function $F: \theta \rightarrow H(\theta)$ as a book-keeping for choosing the 
proper posets that preserve $T$. Moreover define $Z$ as the set of ordinals $\alpha$, such 
that $(H(\alpha), F \restriction \alpha)$ is elementary in $(H(\theta), F)$. Then let 
$\mathcal{Z}^{\theta} = \mathcal{Z}_0^{\theta} \cup \mathcal{Z}_1^{\theta}$, where $\mathcal{Z}_0^{\theta}$
is the collection of all countable elementary substructure of $(H(\theta), F)$ and 
$\mathcal{Z}_1^{\theta}$ is the collection of all $H(\alpha)$, such that $\alpha \in Z$ has
uncountable cofinality - hence $H(\alpha)$ is countably closed.  Moreover, for $\alpha \in Z$, 
let $f(\alpha)$ be the least cardinal such that $F(\alpha) \in H(f(\alpha))$. Notice
that, by elementarity, $f(\alpha)$ is smaller than the next element of $Z$ above $\alpha$.

\begin{defi} 
If $\mathcal{M}$ is a set of models in $\mathcal{Z}^{\theta}$, let
$\pi_0(\mathcal{M})=\mathcal{M}\cap {\mathcal{Z}_0^{\theta}}$ and $\pi_1(\mathcal{M})=\mathcal{M}\cap \mathcal{Z}_1^{\theta}$.
\end{defi}

With an abuse of notation we will identify an $\in$-chain of models with
the set of models that belong to it. 

\begin{defi}
Let $\mathbb{M}_\theta^2$ the poset, whose conditions $\mathcal{M}_p$ are 
$\in$-chains of models in $\mathcal{Z}^{\theta}$, closed under intersection. 
If $p, q \in {\mathbb{M}_\theta^2}$, we define $p \leq q$ iff $\mathcal{M}_q \subseteq \mathcal{M}_p$.
\end{defi}

See Claim 4.1 in \cite{Neeman2} for the proof that $\mathbb{M}_\theta^2$ is 
$\mathcal{Z}^{\theta}$-strongly proper.

\begin{defi}\label{PFAposet}
Conditions in the poset $\mathbb{A}(T)$ are pairs $p = (\mathcal{M}_p, w_p)$ so that:
\begin{enumerate}
\item $\mathcal{M}_p \in \mathbb{M}_\theta^2$.
\item $w_p$ is a partial function on $\theta$, with domain contained in the (finite) set 
$\{\alpha < \theta : H(\alpha) \in p$ and $\Vdash_{\mathbb{A}(T) \cap H(\alpha)} ``F(\alpha) $ is a proper poset, that preserves $T \}$.
\item For $\alpha \in \emph{dom} (w_p)$, $w_p(\alpha) \in H(f(\alpha))$.
\item $\Vdash_{\mathbb{A}(T) \cap H(\alpha)} w_p(\alpha) \in F(\alpha)$.
\item If $M \in \pi_0(\mathcal{M}_p)$ and $\alpha \in M$, then $(p \cap H(\alpha), w_p \restriction \alpha) \Vdash_{\mathbb{A}(T) \cap H(\alpha)} ``w_p(\alpha)$
is an $(M[\dot{G}_\alpha], F(\alpha))$-generic condition'', where $\dot{G}_\alpha$ is the canonical name for a 
generic filter on $\mathbb{A}(T) \cap H(\alpha)$. 
\end{enumerate}

The ordering on $\mathbb{A}(T)$ is the following: 
$q \leq p$ iff $\mathcal{M}_p \subseteq \mathcal{M}_q$ and 
for every $\alpha \in \emph{dom}(w_p)$, 
$(\mathcal{M}_q \cap H(\alpha), w_q \restriction \alpha) \Vdash_{\mathbb{A}(T) \cap H(\alpha)} ``w_q(\alpha) \leq_{F(\alpha)} w_p(\alpha)$''.
\end{defi}

\begin{remark}\label{definition}
This inductive definition makes sense, since
$\mathbb{A}(T) \cap H(\alpha)$ is definable in any $M \in \mathcal{Z}_0^{\theta}$,
with $\alpha \in M$. 
\end{remark}

\begin{remark}\label{iff}
Condition (5) holds for $\alpha$ and $M$ iff it holds for $\alpha$ and $M \cap H(\gamma)$, whenever $\gamma \in Z \cup \{\theta\}$,
is larger than $\alpha$.
\end{remark}


\begin{defi}
Let $\beta$ be an ordinal in $Z \cup \{\theta\}$. The poset $\mathbb{A}(T)_\beta$ consists of conditions $p \in \mathbb{A}(T)$ 
such that dom$(w_p) \subseteq \beta$. 
\end{defi}

\begin{remark}
In order to simplify the notation, if $p \in \mathbb{A}(T)$, then we define $(p)_\alpha$ to be $(\mathcal{M}_p, w_p \restriction \alpha)$, 
while by $p \up H(\alpha)$ we denote $(\mathcal{M}_p \cap H(\alpha), w_p \restriction \alpha)$. Notice that 
$(p)_\alpha \in \mathbb{A}(T)_\alpha$ and $p \up H(\alpha) \in \mathbb{A}(T) \cap H(\alpha)$.
\end{remark}

Following Neeman it is possible to prove the following facts. See \cite{Neeman2} for their proofs in the 
case of the forcing $\mathbb{A}$ i.e. the poset that forces PFA with finite conditions. Indeed, the only 
difference between $\mathbb{A}$ and $\mathbb{A}(T)$
is that the Laver function $F$ picks up a smaller class of proper posets; namely the class of proper poset
that preserve $T$.

\begin{teo}\label{s-proper}(Neeman, Lemma 6.7 in \cite{Neeman2})
Let $\beta \in Z \cup \{\theta\}$. Then $\mathbb{A}(T)_\beta$ is $\mathcal{Z}^{\theta}_1$-strongly proper. \qed
\end{teo}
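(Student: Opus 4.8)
The plan is to follow Neeman's argument for $\mathbb{A}$ (Lemma 6.7 in \cite{Neeman2}) essentially verbatim, verifying that the only new feature---that $F$ ranges over proper posets \emph{preserving $T$} rather than all proper posets---never enters the proof of $\mathcal{Z}_1^\theta$-strong properness. Fix $\beta \in Z \cup \{\theta\}$, a model $W = H(\gamma) \in \mathcal{Z}_1^\theta$ with $\beta \in W$ (so $\gamma > \beta$, or $\gamma = \theta$ and we work inside the ambient universe), and a condition $p \in \mathbb{A}(T)_\beta \cap W$. I would define the residue $p|W$ by adjoining $W$ (and closing under intersection, which for a model of $\mathcal{Z}_1^\theta$ added on top is automatic since $\mathcal{M}_p \subseteq W$) to $\mathcal{M}_p$ and keeping $w_p$ unchanged: $p|W = (\mathcal{M}_p \cup \{W\}, w_p)$. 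The first task is to check this is a condition: $\mathcal{M}_p \cup \{W\}$ is an $\in$-chain in $\mathcal{Z}^\theta$ closed under intersection (the intersection of $W$ with any countable $M \in \pi_0(\mathcal{M}_p)$ is just $M$, since $M \in W$ and $W$ is transitive-enough), clauses (2)--(5) of Definition \ref{PFAposet} refer only to the countable models in $\pi_0$ and the behaviour of $w_p$, so they are untouched by adding the uncountable model $W$.

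The heart of the matter is showing $p|W$ is $(W,\mathbb{A}(T)_\beta)$-strongly generic, i.e. that for every $q \leq p|W$ there is a reduct $q{\restriction}W \in \mathbb{A}(T)_\beta \cap W$ such that every $q' \leq q{\restriction}W$ in $W$ is compatible with $q$. Following Neeman, the reduct is $q{\restriction}W = (\mathcal{M}_q \cap W, w_q{\restriction}(\cdot))$, where on the working part one replaces each $w_q(\alpha)$ by a name, forced by $q{\restriction}H(\alpha)$, for a condition in $F(\alpha)$ that is ``strongly below'' $w_q(\alpha)$ in the appropriate sense; this is built by induction on the finitely many $\alpha \in \dom(w_q) \cap W$, using at each step the $\mathcal{Z}_1^\theta$-strong properness of $\mathbb{A}(T) \cap H(\alpha)$ (which is Theorem \ref{s-proper} at the smaller ordinal $\alpha$, or the base case Claim 4.1 of \cite{Neeman2} for $\mathbb{M}_\theta^2$ when $\dom(w_q)$ is empty). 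The amalgamation step---given $q' \leq q{\restriction}W$ in $W$, produce $r \leq q, q'$---proceeds by interleaving the model-chains $\mathcal{M}_q$ and $\mathcal{M}_{q'}$ along $W$ (they agree below $W$ by the choice of reduct, and $\mathcal{M}_{q'} \subseteq W$), and then, again by induction on $\alpha \in (\dom(w_q) \cup \dom(w_{q'}))$, using the inductive strong genericity at $\mathbb{A}(T) \cap H(\alpha)$ to find a name for a common lower bound of $w_q(\alpha)$ and $w_{q'}(\alpha)$ in $F(\alpha)$. Here it is crucial, exactly as in Neeman, that clause (5) is a statement forced over $\mathbb{A}(T) \cap H(\alpha)$ about $(M[\dot G_\alpha], F(\alpha))$-genericity, and that this property is preserved under passing to stronger conditions in $F(\alpha)$---which holds because $F(\alpha)$ is (forced to be) proper, and genericity for a fixed model is downward closed.

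The step I expect to be the main obstacle is the bookkeeping in the amalgamation: verifying that the amalgamated model-chain $\mathcal{M}_r$ is genuinely an $\in$-chain closed under intersection (the intersection points introduced when splicing $\mathcal{M}_{q'} \subseteq W$ with the tail $\mathcal{M}_q \setminus W$ must all already lie in one of the two chains), and that the inductively constructed working part $w_r$ satisfies clause (5) at \emph{every} model of $\mathcal{M}_r$ simultaneously, not just at models coming from one side. This is where Neeman's proof does its real work, and I would simply invoke it: since $\mathbb{A}(T) \cap H(\alpha)$ and $\mathbb{A}(T)_\beta$ have exactly the same definition as $\mathbb{A} \cap H(\alpha)$ and $\mathbb{A}_\beta$ save for the restriction on the range of $F$, and the range of $F$ plays no role whatsoever in the combinatorics of chains of models or in the preservation of $F(\alpha)$-genericity under refinement, the proof of Lemma 6.7 in \cite{Neeman2} transfers without change. $\qed$
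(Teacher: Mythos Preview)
Your proposal is correct and takes the same approach as the paper: the paper gives no proof of its own here, simply deferring to Neeman's Lemma~6.7 with the remark that the only difference between $\mathbb{A}$ and $\mathbb{A}(T)$ is the (irrelevant) restriction on the range of $F$. Your sketch of Neeman's argument is more detailed than what the paper offers, but the key observation---that the range of $F$ plays no role in the $\mathcal{Z}_1^\theta$-strong properness argument---is exactly the paper's.
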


\vspace{0cm} 

\begin{cla}\label{extension}(Neeman, Claim 6.10 in \cite{Neeman2})
Let $p$, $q \in \mathbb{A}(T)$. Let $M \in  \pi_0(\mathcal{M}_p)$ and suppose that $q \in M$. Suppose that for some $\delta < \theta$, $p$ extends 
$(q)_\delta$ and dom$(w_q) \setminus \delta$ is disjoint from dom$(w_p)$. Suppose further that $(\mathcal{M}_p \cap M) \setminus H(\delta) \subseteq \mathcal{M}_q$. 
Then there is $w_{p'}$ extending $w_p$ so that dom($w_{p'}) = $dom$(w_p) \cup ($dom$(w_q) \setminus \delta)$ and so 
that $p' = (\mathcal{M}_p, w_{p'})$ is a condition in $\mathbb{A}(T)$ extending $q$. \qed
\end{cla}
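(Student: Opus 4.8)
The plan is to build $w_{p'}$ by recursion along $\dom(w_q)\setminus\delta$, extending $w_p$ one coordinate at a time. List $\dom(w_q)\setminus\delta = \{\alpha_0 < \alpha_1 < \dots < \alpha_{n-1}\}$; these ordinals all lie in $M$ since $q\in M$ and they are below $\theta$. We will define an increasing sequence of partial functions $w_p = v_{-1} \subseteq v_0 \subseteq \dots \subseteq v_{n-1} = w_{p'}$ with $\dom(v_i) = \dom(w_p)\cup\{\alpha_0,\dots,\alpha_i\}$, maintaining the inductive hypothesis that $p'_i := (\mathcal{M}_p, v_i)$ is a condition in $\mathbb{A}(T)_{\alpha_i+1}$ extending $q\up H(\alpha_i+1)$ (more precisely, extending $(q)_{\alpha_i+1}$, i.e. $q$ with its working part truncated above $\alpha_i$). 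The base case $i=-1$ is exactly the hypothesis that $p$ extends $(q)_\delta$, together with the assumption $\dom(w_q)\setminus\delta$ disjoint from $\dom(w_p)$, which guarantees we are genuinely adding new coordinates rather than overwriting.

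For the recursion step at $\alpha = \alpha_i$: we already have $p'_{i-1}\in\mathbb{A}(T)_{\alpha}$ extending $(q)_{\alpha}$. Working inside $\mathbb{A}(T)\cap H(\alpha)$, the condition $p'_{i-1}\up H(\alpha) = (\mathcal{M}_p\cap H(\alpha), v_{i-1}\up\alpha)$ forces that $w_q(\alpha)$ is a condition in $F(\alpha)$ (by clause (4) for $q$, pulled below via the fact that $p'_{i-1}$ extends $(q)_\alpha$). The idea is to take $v_i = v_{i-1}\cup\{(\alpha, \dot w)\}$ where $\dot w$ is an $\mathbb{A}(T)\cap H(\alpha)$-name forced by $p'_{i-1}\up H(\alpha)$ to be an extension of $w_q(\alpha)$ in $F(\alpha)$ that is simultaneously $(M[\dot G_\alpha], F(\alpha))$-generic for every $M\in\pi_0(\mathcal{M}_p)$ with $\alpha\in M$ — possible because $F(\alpha)$ is forced to be proper and $\mathcal{M}_p\cap H(\alpha)$ is (after intersecting with the relevant $H(\gamma)$, using Remark \ref{iff}) a finite $\in$-chain of countable models, so one handles them from the top down, each extension staying generic for the ones already processed. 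Here is where the hypothesis $(\mathcal{M}_p\cap M)\setminus H(\delta)\subseteq\mathcal{M}_q$ enters: it ensures that the countable models of $p$ relevant to coordinate $\alpha$ (those with $\alpha\in M'$, which forces $M'\in M$ hence $M'\in\mathcal{M}_p\cap M$, and $M'\notin H(\delta)$ since otherwise $\alpha$ would be $<\delta$) already occur in $\mathcal{M}_q$, so that the genericity already verified for $q$ at coordinate $\alpha$ can be transferred and we do not create genericity obligations that conflict with $q$'s working part.

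The main obstacle is verifying clause (5) for $p'_{n-1}$ at the \emph{old} coordinates of $w_p$, not just the new ones, after the working part has been enlarged — one must check that enlarging $w_p\up\alpha'$ to $v_{n-1}\up\alpha'$ does not destroy the genericity statements already true for $p$ at coordinates $\alpha'\in\dom(w_p)$. This is handled by the standard observation that clause (5) at $\alpha'$ is a statement forced by a side-condition part below $\alpha'$, and since $v_{n-1}$ only adds coordinates in $\dom(w_q)\setminus\delta$ and these are distributed according to where they lie relative to $\alpha'$, passing to a stronger condition in $\mathbb{A}(T)\cap H(\alpha')$ preserves the forced genericity; the disjointness and the $\in$-chain structure keep the bookkeeping consistent. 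The conclusion that $p' = (\mathcal{M}_p, w_{p'})$ extends $q$ in $\mathbb{A}(T)$ is then read off directly: $\mathcal{M}_q\subseteq\mathcal{M}_p$ holds since $\mathcal{M}_p\supseteq(\mathcal{M}_p\cap M)\setminus H(\delta)$ together with the part of $\mathcal{M}_q$ inside $H(\delta)$ coming from $p$ extending $(q)_\delta$; and for each $\alpha\in\dom(w_q)$ the required forcing of $w_{p'}(\alpha)\leq_{F(\alpha)} w_q(\alpha)$ holds by construction (for $\alpha\geq\delta$) or by $p$ already extending $(q)_\delta$ (for $\alpha<\delta$).
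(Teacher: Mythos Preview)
The paper gives no proof of this claim: it is stated as Neeman's Claim~6.10 and closed with \qed, the author having noted just above that the proofs carry over verbatim from $\mathbb{A}$ to $\mathbb{A}(T)$ since the only difference between the two posets is the class of forcings the Laver function selects. So there is no argument in the paper itself to compare against beyond the citation. Your plan is essentially Neeman's: induct along the finitely many new coordinates $\alpha_0<\dots<\alpha_{n-1}$ in $\dom(w_q)\setminus\delta$, and at each $\alpha=\alpha_i$ use properness of $F(\alpha)$ in the extension by $\mathbb{A}(T)\cap H(\alpha)$ to extend $w_q(\alpha)$ to a name forced to be generic for every countable $N\in\pi_0(\mathcal{M}_p)$ with $\alpha\in N$, invoking Remark~\ref{iff} to reduce the requirement to models inside $H(\gamma)$ for $\gamma$ the next point of $Z$ above $\alpha$.

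One step is wrong as written. You assert that for a countable $M'\in\pi_0(\mathcal{M}_p)$, having $\alpha\in M'$ ``forces $M'\in M$''. It does not: $M$ itself contains $\alpha$ (since $q\in M$), and so may any countable model of $\mathcal{M}_p$ above $M$ in the $\in$-chain. The correct split is the following. For $N\in\pi_0(\mathcal{M}_p)$ with $\alpha\in N$ and $N\in M$, one has $N\in(\mathcal{M}_p\cap M)\setminus H(\delta)$ (your observation that $\alpha\ge\delta$ with $\alpha\in Z$ gives $N\notin H(\delta)$ is fine), so the hypothesis places $N\in\mathcal{M}_q$ and clause~(5) for $q$ already forces $w_q(\alpha)$ to be $(N[\dot G_\alpha],F(\alpha))$-generic; any extension inherits this. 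For $N$ with $\alpha\in N$ but $N\notin M$ --- in particular $N=M$ and every countable model above it --- one cannot appeal to $q$; instead, since $q\in M$ the name $w_q(\alpha)$ lies in $M$, and the usual properness argument run along the finite chain of such $N$ (after the reduction of Remark~\ref{iff}) produces the required common generic extension. Thus the hypothesis $(\mathcal{M}_p\cap M)\setminus H(\delta)\subseteq\mathcal{M}_q$ is what lets the properness recursion \emph{stop} at $M$, not what confines all relevant models to $M$. With this correction your outline goes through.
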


\vspace{0cm}

\begin{teo}\label{properA}(Neeman, Lemma 6.11 in \cite{Neeman2})
Let $\beta \in Z \cup \{\theta\}$. Let $p$ be a condition in $\mathbb{A}(T)_\beta$. Let $\theta^* > \theta$ and let $M^* \prec H(\theta^*)$ 
be countable with $F, \beta \in M^*$. Let $M = M^* \cap H(\theta)$ and suppose that $M \in \pi_0(\mathcal{M}_p)$. Then:
\begin{enumerate}
\item for every $D \in M^*$ which is dense in $\mathbb{A}(T)_\beta$,there is $q \in D \cap M^*$ which is compatible with $p$. 
Moreover there is $r \in \mathbb{A}(T)_\beta$ extending both $p$ and  $q$, so that $\mathcal{M}_r \cap M \setminus H(\beta) \subseteq \mathcal{M}_q$, 
and every model in $\pi_0(\mathcal{M}_r)$ above $\beta$ and outside $M$ are either models in $\mathcal{M}_p$ or of the form $N' \cap W$, where $N'$ is a model in $\pi_0(\mathcal{M}_p)$.
\item $p$ is an $(M^*, \mathbb{A}(T)_\beta)$-generic condition. \qed
\end{enumerate}
\end{teo}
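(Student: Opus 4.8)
The plan is to prove the two parts of Theorem~\ref{properA} simultaneously by induction on $\beta \in Z \cup \{\theta\}$, following Neeman's argument for $\mathbb{A}$ and noting that the only modification needed is that the Laver function $F$ now ranges over proper posets \emph{preserving} $T$; since Claim~\ref{extension} and Theorem~\ref{s-proper} have already been stated for $\mathbb{A}(T)$, the structural ingredients are in place. Part (2) follows from part (1): if $D \in M^*$ is dense and $p' \leq p$ is arbitrary, applying the moreover-clause of (1) to $p'$ produces $q \in D \cap M^*$ compatible with $p'$, which is exactly genericity. So the work is in part (1).

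For part (1), I would fix $D \in M^*$ dense in $\mathbb{A}(T)_\beta$ and argue by cases on whether $\beta$ is the top $\theta$, a limit point of $Z$, or a successor point of $Z$ (i.e.\ $\beta = \gamma^+$ for $\gamma$ the largest element of $Z$ below $\beta$, or $\beta \in Z$ with an immediate predecessor in $Z\cup\{\theta\}$ meeting $M^*$). In the base/successor case one uses the strong properness of $\mathbb{A}(T)_\gamma = \mathbb{A}(T)_\beta \cap H(\gamma)$-type posets from Theorem~\ref{s-proper} together with the key forcing-over-$\mathbb{A}(T)\cap H(\gamma)$ step: since $\Vdash_{\mathbb{A}(T)\cap H(\gamma)} ``F(\gamma)$ is proper and preserves $T$'', one finds below the residue $p\up H(\gamma)$ a condition forcing $w_p(\gamma)$ to be $(M[\dot G_\gamma], F(\gamma))$-generic, meeting the (name for the) dense set derived from $D$. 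In the limit case, one uses the fact that $M^* \cap Z$ is cofinal in $M^* \cap \beta$, pulls $D$ down to a lower $\mathbb{A}(T)_{\beta'}$, applies the induction hypothesis there to get $q_0 \in M^*$ compatible with the residue of $p$, and then uses Claim~\ref{extension} to glue $q_0$ and $p$ into a genuine condition $r \in \mathbb{A}(T)_\beta$ whose model part is $\mathcal{M}_p$ augmented appropriately, verifying the bookkeeping clauses on $\mathcal{M}_r \cap M \setminus H(\beta)$ and on the shape of new models in $\pi_0(\mathcal{M}_r)$.

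The main obstacle I expect is the verification that the amalgamated condition $r$ actually satisfies clause (5) of Definition~\ref{PFAposet} — that is, that for every $M' \in \pi_0(\mathcal{M}_r)$ and every $\alpha \in M' \cap \mathrm{dom}(w_r)$, the residue of $r$ below $H(\alpha)$ forces $w_r(\alpha)$ to be $(M'[\dot G_\alpha], F(\alpha))$-generic. For the models coming from $q$ this is inherited from $q$ being a condition, and for models in $\mathcal{M}_p$ or of the form $N' \cap W$ with $N'\in\pi_0(\mathcal{M}_p)$ one uses Remark~\ref{iff} together with the fact that genericity for $N'$ transfers to $N' \cap W$; the delicate point is that the forcing $F(\alpha)$ only preserves $T$, which is irrelevant here since clause (5) is about $F(\alpha)$-genericity, not about $T$ — so in fact the argument is \emph{verbatim} Neeman's, and the role of ``preserves $T$'' is confined to clause (2)'s definition of the domain of $w_p$. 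I would therefore present the proof as: ``The proof is identical to that of Lemma 6.11 in \cite{Neeman2}, replacing $\mathbb{A}$ by $\mathbb{A}(T)$ throughout and using Claim~\ref{extension} and Theorem~\ref{s-proper} in place of their counterparts; the restriction of $F$ to $T$-preserving posets only affects which $\alpha$ may lie in $\mathrm{dom}(w_p)$ and plays no role in the genericity computations,'' and then spell out the limit-stage amalgamation via Claim~\ref{extension} in a paragraph of detail to make the paper self-contained.

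\begin{proof}
We prove (1) and (2) together by induction on $\beta \in Z \cup \{\theta\}$; as explained above, (2) is an immediate consequence of the \emph{moreover} part of (1) applied to an arbitrary extension of $p$ lying in a given dense set, so we concentrate on (1). The argument is a straightforward adaptation of the proof of Lemma~6.11 in \cite{Neeman2}: the only difference between $\mathbb{A}(T)$ and Neeman's $\mathbb{A}$ is that, in clause (2) of Definition~\ref{PFAposet}, the ordinal $\alpha$ is required to force that $F(\alpha)$ is proper \emph{and preserves $T$}; this affects only which ordinals may belong to $\mathrm{dom}(w_p)$ and is invisible to every genericity computation below, all of which concern $F(\alpha)$-genericity. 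We indicate the steps.

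Fix $D \in M^*$ dense in $\mathbb{A}(T)_\beta$.

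\emph{Successor case.} Suppose $\beta$ has an immediate predecessor in $Z \cup \{\theta\}$, say $\gamma$, with $\gamma \in M^*$; write $\delta = \gamma$ (the case $\beta \in Z$ and $\mathrm{dom}(w_p)$ possibly using $\gamma$). By Theorem~\ref{s-proper}, $\mathbb{A}(T)_\gamma$ is $\mathcal{Z}_1^\theta$-strongly proper, and $\mathbb{A}(T) \cap H(\gamma)$ has a residue function onto which $p$ projects via $p \up H(\gamma)$. Working in $V^{\mathbb{A}(T)\cap H(\gamma)}$ below $p\up H(\gamma)$ and using that this forcing makes $F(\gamma)$ proper and $T$-preserving, one finds $q \in D \cap M^*$ and, by clause (5) together with the properness of $F(\gamma)$ over $M[\dot G_\gamma]$, an extension $r \in \mathbb{A}(T)_\beta$ of both $p$ and $q$ with the stated control on $\mathcal{M}_r \cap M \setminus H(\beta)$ and on the new models of $\pi_0(\mathcal{M}_r)$ above $\beta$; this is exactly the argument of \cite{Neeman2}, Lemma~6.11, successor case, since the clause ``preserves $T$'' is simply carried along inside the forcing statement and never invoked.

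\emph{Limit case.} Suppose $\beta$ is a limit point of $Z \cup \{\theta\}$. Since $M^* \prec H(\theta^*)$ contains $F$ and $\beta$, the set $M^* \cap Z$ is cofinal in $M^* \cap \beta$; choose $\gamma \in M^* \cap Z$ below $\beta$ with $\mathrm{dom}(w_p) \subseteq \gamma$, so that $p \in \mathbb{A}(T)_\gamma$. By the induction hypothesis applied to $\gamma$, the dense set $D$ (intersected with $\mathbb{A}(T)_\gamma$, which is dense there by genericity considerations and lies in $M^*$) yields $q \in D \cap M^*$ compatible with $p$, together with an extension $r_0 \in \mathbb{A}(T)_\gamma$ of both, with $\mathcal{M}_{r_0} \cap M \setminus H(\gamma) \subseteq \mathcal{M}_q$ and the prescribed shape for the new models of $\pi_0(\mathcal{M}_{r_0})$. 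Now apply Claim~\ref{extension} with the roles: the condition ``$p$'' of the claim is $r_0$, the condition ``$q$'' of the claim is (a suitable representative of) $q$, the ordinal ``$\delta$'' is $\gamma$, and $M$ is as given; the hypotheses $r_0$ extends $(q)_\gamma$, $\mathrm{dom}(w_q)\setminus\gamma$ disjoint from $\mathrm{dom}(w_{r_0})$, and $(\mathcal{M}_{r_0}\cap M)\setminus H(\gamma)\subseteq \mathcal{M}_q$ hold by construction. Claim~\ref{extension} produces $w_r$ extending $w_{r_0}$ with $\mathrm{dom}(w_r)=\mathrm{dom}(w_{r_0})\cup(\mathrm{dom}(w_q)\setminus\gamma)$ such that $r = (\mathcal{M}_{r_0}, w_r)$ is a condition in $\mathbb{A}(T)$ extending $q$; since $\mathrm{dom}(w_r)\subseteq\beta$, in fact $r\in\mathbb{A}(T)_\beta$, and $r$ extends $p$ as well. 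The required control on $\mathcal{M}_r\cap M\setminus H(\beta)$ and on $\pi_0(\mathcal{M}_r)$ above $\beta$ is inherited from $r_0$ and $q$.

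In all cases clause (5) of Definition~\ref{PFAposet} for $r$ is checked model by model: for $M' \in \pi_0(\mathcal{M}_r)$ arising from $q$ it is inherited from $q$; for $M' \in \pi_0(\mathcal{M}_p)$ or $M' = N' \cap W$ with $N' \in \pi_0(\mathcal{M}_p)$ it follows from the corresponding property of $p$ via Remark~\ref{iff} and the standard fact that $(N',F(\alpha))$-genericity over $\mathbb{A}(T)\cap H(\alpha)$ transfers to $N' \cap W$. None of these verifications uses that $F(\alpha)$ preserves $T$. This completes the induction and the proof.
\end{proof}
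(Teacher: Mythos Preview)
Your proposal is correct and matches the paper's approach exactly: the paper does not give a proof of this theorem at all, but simply cites Neeman's Lemma~6.11 and remarks that ``the only difference between $\mathbb{A}$ and $\mathbb{A}(T)$ is that the Laver function $F$ picks up a smaller class of proper posets,'' which is precisely your observation that the clause ``preserves $T$'' is invisible to the genericity computations. One small imprecision in your limit-case sketch: $D$ is dense in $\mathbb{A}(T)_\beta$, not in $\mathbb{A}(T)_\gamma$, so you cannot directly ``intersect $D$ with $\mathbb{A}(T)_\gamma$'' and apply the induction hypothesis; rather one works with the auxiliary set of $(q)_\gamma$ that extend to some $q \in D$ with $\mathcal{M}_p \cap M \subseteq \mathcal{M}_q$ (and its dense closure $E_0 = E_0^{\leq} \cup E_0^{\perp}$), exactly as in the paper's later limit-stage argument in Theorem~\ref{preservation}, and only after finding a compatible $(q)_\gamma$ does one recover $q \in D \cap M^*$ and invoke Claim~\ref{extension}.
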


\vspace{0cm}


\begin{teo}\label{PFA(T)}(Neeman, Lemma 6.13 in \cite{Neeman2})
After forcing with $\mathbb{A}(T)$, PFA($T$) holds. \qed
\end{teo}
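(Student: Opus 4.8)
The plan is to follow the standard template for deriving a forcing axiom from an iteration/side-condition construction of length a supercompact, adapting Neeman's argument (Lemma 6.13 in \cite{Neeman2}) to the restricted class of proper posets preserving $T$. Let $\mathbb{Q}$ be a poset that is, in the extension $V[G]$ by $\mathbb{A}(T)$, proper and preserving $T$, and let $\{D_i : i < \omega_1\}$ be a family of dense subsets of $\mathbb{Q}$ in $V[G]$. Working below a condition that forces $\dot{\mathbb{Q}}$ and the $\dot D_i$ into the range of the Laver function, we find, by supercompactness of $\theta$ and the Laver property of $F$, an ordinal $\alpha \in Z$ with $F(\alpha)$ a $\mathbb{A}(T)\cap H(\alpha)$-name for a poset forced to be proper and $T$-preserving, such that $F(\alpha)$ and the relevant family of dense sets reflect correctly; the key point is that $\mathbb{Q}$ and its dense sets can be captured at stage $\alpha$ because $G$ restricted to $H(\alpha)$ already decides that $F(\alpha)$ is proper and preserves $T$ (this is exactly clause (2) of Definition \ref{PFAposet}, and it matters that the class of posets $F$ ranges over is the $T$-preserving one).

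Next I would extract the generic object for $\mathbb{Q}$ from $G$ itself. By construction of $\mathbb{A}(T)$ — specifically clauses (4) and (5) of Definition \ref{PFAposet} — the coordinates $w_p(\alpha)$ of conditions $p \in G$ form a filter on $F(\alpha)^{G_\alpha}$, and clause (5) guarantees that this filter is $(M[\dot G_\alpha], F(\alpha))$-generic for every relevant countable $M$. Since every dense subset $D_i$ of $\mathbb{Q}$ lies in some such $M \in \pi_0(\mathcal{M}_p)$ for $p \in G$ (using that $G$ meets the dense set of conditions whose model-part contains a countable elementary submodel capturing $D_i$ — this density is where Theorem \ref{properA} and the $\mathcal{Z}_1^\theta$-strong properness of Theorem \ref{s-proper} do their work, ensuring $\omega_1$ is preserved and the tower of models is rich enough), genericity of the $\alpha$-th coordinate filter over $M$ gives a condition in the filter lying in $D_i$. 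Hence the induced filter meets all $\omega_1$-many $D_i$, witnessing the relevant instance of PFA($T$).

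Finally I would verify that no cardinals or cofinalities relevant to the statement are collapsed, so that ``$\mathbb{Q}$ is proper'' and ``the $D_i$ are $\omega_1$-many'' have the same meaning in $V[G]$ as intended: this follows from $\mathbb{A}(T) = \mathbb{A}(T)_\theta$ being $\mathcal{Z}_1^\theta$-strongly proper (Theorem \ref{s-proper}) hence proper and $\omega_1$-preserving, and from a $\theta$-cc argument using the supercompactness of $\theta$ to see that $\theta$ becomes $\omega_2$ in $V[G]$, so that we have genuinely handled every $\mathbb{Q}$ and every $\aleph_1$-sized family of dense sets that appear in $V[G]$. The main obstacle is the reflection step: one must arrange that a \emph{name} $\dot{\mathbb{Q}}$ for a $T$-preserving proper poset in the final model is anticipated by $F$ at a stage $\alpha$ where the partial generic $G_\alpha$ already forces, via $\mathbb{A}(T)\cap H(\alpha)$, that $F(\alpha)$ is proper and preserves $T$ — the delicacy is that $T$-preservation is not absolute between $H(\alpha)$ and $V[G]$, so the Laver function must be used to guess not just $\mathbb{Q}$ but a name whose $T$-preservation is \emph{provable} from the appropriate initial segment, exactly as in Neeman's treatment of properness but now with the extra conjunct about $T$; since, as remarked in the introduction, the proof of the preservation theorem never uses that $T$ is a tree, this extra conjunct rides along without new difficulties.
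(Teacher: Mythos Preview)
Your sketch is correct and matches the paper's approach, which is simply to invoke Neeman's Lemma~6.13 verbatim with the remark that the only difference between $\mathbb{A}$ and $\mathbb{A}(T)$ is the smaller class of posets the Laver function targets; the paper supplies no argument beyond that citation and the \qed. Your expansion of the reflection-and-extraction argument is faithful to Neeman's proof, though the worry in your final paragraph is slightly overstated: once the supercompact embedding $j$ satisfies $j(F)(\theta)=\dot{\mathbb{Q}}$, the clause ``$j(F)(\theta)$ is forced by $j(\mathbb{A}(T))\cap H(\theta)=\mathbb{A}(T)$ to be proper and $T$-preserving'' is literally the hypothesis on $\dot{\mathbb{Q}}$, so the extra conjunct about $T$ is carried along by elementarity exactly as properness is, with no new delicacy to arrange.
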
 

In order to show that $\mathbb{A}(T)$ preserves $T$, we need the following claim.  

\begin{cla}\label{working}
If $\Vdash_{\mathbb{A}(T)_\alpha} ``T$ is Souslin'', then $\Vdash_{\mathbb{A}(T)_\alpha \cap H(\alpha)} ``T$ is Souslin''.
\end{cla}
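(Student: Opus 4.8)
The plan is to show that $\mathbb{A}(T)_\alpha\cap H(\alpha)$ is, up to a dense set, a complete suborder (or at least a projection target) of $\mathbb{A}(T)_\alpha$, so that any name for a maximal antichain of $T$ in the smaller forcing extension can be transported to the larger one. Concretely, I would first observe that the map $p\mapsto p\up H(\alpha)=(\mathcal{M}_p\cap H(\alpha),w_p\up\alpha)$ sends conditions of $\mathbb{A}(T)_\alpha$ into $\mathbb{A}(T)_\alpha\cap H(\alpha)$, and that this map is order preserving. The key point to verify is that it is a projection in the forcing sense: whenever $q\in\mathbb{A}(T)_\alpha\cap H(\alpha)$ extends $p\up H(\alpha)$ for some $p\in\mathbb{A}(T)_\alpha$, there is a common extension $r\in\mathbb{A}(T)_\alpha$ of $p$ with $r\up H(\alpha)\le q$. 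This is essentially an instance of Claim~\ref{extension}: given such a $q$, one uses the amalgamation of side conditions and working parts provided there (with $\delta=\alpha$ and the appropriate disjointness of domains, which holds automatically since $q\in H(\alpha)$ so $\mathrm{dom}(w_q)\subseteq\alpha$ while the part of $p$ above $\alpha$ is untouched) to glue $q$ onto $p$.

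Once the projection is in place, I would argue as follows. Suppose toward a contradiction that $\mathbb{A}(T)_\alpha\cap H(\alpha)$ does \emph{not} force ``$T$ is Souslin'', i.e.\ some condition $q_0\in\mathbb{A}(T)_\alpha\cap H(\alpha)$ forces that $\dot A$ is an uncountable maximal antichain of $T$. Using the projection, lift $q_0$ to a condition $p_0\in\mathbb{A}(T)_\alpha$ with $p_0\up H(\alpha)\le q_0$ (for instance $p_0=q_0$ itself viewed in the bigger poset, since $\mathbb{A}(T)_\alpha\cap H(\alpha)\subseteq\mathbb{A}(T)_\alpha$). Because the generic for $\mathbb{A}(T)_\alpha$ restricts to a generic for $\mathbb{A}(T)_\alpha\cap H(\alpha)$ below the projected condition, $\dot A$ is still (forced by $p_0$ to be) a maximal antichain of $T$ in the $\mathbb{A}(T)_\alpha$-extension: maximality is preserved because any $t\in T$ in the larger extension is already in the smaller one (forcing with $\mathbb{A}(T)_\alpha$ adds no new branches or nodes to the ground model tree $T$, as $T$ has height $\omega_1$ and its levels are in the ground model), and uncountability is absolute since $\mathbb{A}(T)_\alpha$ preserves $\omega_1$ by Theorem~\ref{s-proper}. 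Hence $p_0$ forces over $\mathbb{A}(T)_\alpha$ that $T$ has an uncountable maximal antichain, contradicting the hypothesis that $\Vdash_{\mathbb{A}(T)_\alpha}``T$ is Souslin''.

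I would then take the contrapositive: from ``$\mathbb{A}(T)_\alpha\cap H(\alpha)$ does not force $T$ Souslin'' we derived ``$\mathbb{A}(T)_\alpha$ does not force $T$ Souslin'', which is exactly the claim. The main obstacle, and the step deserving the most care, is the verification that $\mathbb{A}(T)_\alpha\cap H(\alpha)$ sits inside $\mathbb{A}(T)_\alpha$ in a way compatible enough with the forcing relation that a maximal antichain in the intermediate extension stays maximal after going all the way up — this is where one must invoke Claim~\ref{extension} to see the restriction map is a projection, and Theorem~\ref{s-proper} (and the fact that $T$'s levels are computed in the ground model) to see no new nodes of $T$ appear that could kill maximality. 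Everything else is bookkeeping about restrictions of conditions and absoluteness of ``uncountable'' under $\omega_1$-preserving forcing.
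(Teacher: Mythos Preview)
Your overall strategy is sound and genuinely different from the paper's. The paper works directly with Miyamoto's characterisation (Lemma~\ref{charact}, clause~(3)): given $p\in(\mathbb{A}(T)_\alpha\cap H(\alpha))\cap M^*$ it adds the countable model $M=M^*\cap H(\theta)$ on top of $\mathcal{M}_p$ to obtain a condition $p^M\in\mathbb{A}(T)_\alpha$, uses the hypothesis on $\mathbb{A}(T)_\alpha$ to get $(M^*,\mathbb{A}(T)_\alpha\times T)$-genericity of $(p^M,t)$, and then runs a hands-on $E$/$E_0$ density argument to pull the required witness back into $H(\alpha)$. Your route instead observes that the extension by $\mathbb{A}(T)_\alpha\cap H(\alpha)$ sits as an intermediate model inside the extension by $\mathbb{A}(T)_\alpha$, and that ``$T$ is Souslin'' is downward absolute between $\omega_1$-preserving extensions. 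This is cleaner and closer in spirit to how one usually transfers preservation along a factor; what it buys is that no $E$-set bookkeeping is needed at all.

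That said, your justification of the key step is misplaced. Claim~\ref{extension} is \emph{not} the right tool for showing that $p\mapsto p\up H(\alpha)$ is a projection: that claim requires a \emph{countable} $M\in\pi_0(\mathcal{M}_p)$ with $q\in M$, and it asks that $p$ extend $(q)_\delta$, which is the opposite of your situation (you have $q\le p\up H(\alpha)$). Running it with ``$\delta=\alpha$'' and $H(\alpha)$ in place of $M$ simply does not match its hypotheses. The projection property you want is exactly the content of $\mathcal{Z}^\theta_1$-strong properness, i.e.\ Theorem~\ref{s-proper}: for a condition $p$ with $H(\alpha)\in\mathcal{M}_p$, the residue $p\up H(\alpha)$ is a strong master condition, so any $q\le p\up H(\alpha)$ in $H(\alpha)$ is compatible with $p$. (The paper itself uses precisely this consequence later, when it asserts that $G\cap H(\alpha)$ is $V$-generic for $\mathbb{A}(T)\cap H(\alpha)$.) Relatedly, Theorem~\ref{s-proper} is about strong properness for the \emph{transitive} models in $\mathcal{Z}^\theta_1$ and says nothing directly about preserving $\omega_1$; for that you should cite properness, i.e.\ Theorem~\ref{properA}. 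Finally, note that your argument, as written, uses that $H(\alpha)$ can be added to conditions, which needs $\alpha\in Z$ of uncountable cofinality; this is harmless since the only place the claim is invoked already assumes $H(\alpha)\in\mathcal{M}_p$, but it is worth making explicit.
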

\begin{proof}
In order to show that $\mathbb{A}(T)_\alpha \cap H(\alpha)$ preserves $T$, we use
the equivalent formulation of Claim \ref{charact}. Then, fix a countable $M^* \prec H(\theta^*)$, with $\theta^* > \theta$ and $\alpha, T \in M^*$.
Then, following Remark \ref{definition}, both $\mathbb{A}(T)_\alpha \cap H(\alpha)$ and $\mathbb{A}(T)_\alpha$ are definable in $M^*$. 
If $p \in (\mathbb{A}(T)_\alpha \cap H(\alpha)) \cap M^*$, then we want to show that there is a condition $p' \leq p$ such that
for every $t \in T_{\delta_{M^*}}$, with $\delta_{M^*} = M^* \cap \omega_1$, the condition $(p',t)$ is
$(M^*, (\mathbb{A}(T)_\alpha \cap H(\alpha)) \times T)$-generic.

Let $M = M^* \cap H(\theta)$ and $\mathcal{M}_{p^M}$ be the closure under intersection of $\mathcal{M}_p \cup \{M\}$. 
It is easy to check that it is possible to find a function $w_{p^M}$ with the same domain of $w_{p}$ 
such that $p^M = (\mathcal{M}_{p^M}, w_{p^M})$
is a condition in $\mathbb{A}(T)_\alpha$ and such that $p^M \up H(\alpha) \leq p$. 
We claim that $p^M \up H(\alpha)$ is the condition we need: i.e. 
$(p^M \up H(\alpha), t)$ is an $(M^*, (\mathbb{A}(T)_\alpha \cap H(\alpha)) \times T)$-generic condition,
for every $t \in T_{\delta_{M^*}}$. 

To this aim fix a set $D \in M^*$ dense in$(\mathbb{A}(T)_\alpha \cap H(\alpha)) \times T$,
let $t \in T_{\delta_{M^*}}$ and assume $(p^M \up H(\alpha), t) \in D$.
By Theorem \ref{properA}, $p^M$ is an $(M^*, \mathbb{A}(T)_\alpha)$-generic condition. Then, 
thanks to our hypothesis, $(p^M, t)$ is an $(M, \mathbb{A}(T)_\alpha \times T)$-generic condition. 

Now define $E$  to be the set of conditions $(q, h) \in \mathbb{A}(T)_\alpha \times T$ such that
$(q \up H(\alpha), h) \in D$ and such that $\mathcal{M}_{p^M} \cap M \subseteq \mathcal{M}_q$. Notice that $(p^M, t) \in E$ and
$E \in M^*$. The set $E$ may not be dense, but $E_0 = E_0^{\leq} \cup E_0^{\perp}$,
where
$$
E_0^{\leq} = \{(q_0, h_0) : \exists (q, h) \in E \mbox{ such that } (q_0, h_0) \leq (q, h)\},
$$
and
$$
E_0^{\perp} = \{(q_0, h_0) : \forall (q, h) \in E \mbox{ } (q_0, h_0) \perp (q, h)\},
$$
is a dense subset of $\mathbb{A}(T)_\alpha \times T$ belonging to $M^*$. 

Then there is $(q_0, h_0) \in E_0 \cap M^*$ that is compatible with $(p^M, t)$. Since $(p^M, t) \in E$,
by definition of $E_0$, there is a condition $(q, h) \in E$ that is compatible with $(p^M, t)$. 
By elementarity we can assume $(q, h) \in E \cap M^*$. Now, the key observation is that by 
strong genericity of the pure side conditions if $(r, t)$ witnesses that $(p^M, t)$ and $(q, h)$ are compatible,
then $(r \up H(\alpha), t)$ witnesses that $(p \up H(\alpha), t)$ and $(q \up H(\alpha), h)$ are compatible. 
This is sufficient for our claim, because by definition of $E$ and since $q$ is finite, 
$(q \up H(\alpha), h) \in D \cap M^*$.
\end{proof}

We can now state and proof the main preservation theorem of this section. 

\begin{teo}\label{preservation}
If $G$ is a generic filter for $\mathbb{A}(T)$, then in $V[G]$ the tree $T$ is Souslin.
\end{teo}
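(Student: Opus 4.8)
The plan is to show that $T$ remains Souslin in $V[G]$ by verifying, in $V$, the generic analog of the characterization in Lemma \ref{charact}(3) for the full forcing $\mathbb{A}(T) = \mathbb{A}(T)_\theta$; that is, I will show that for a countable $M^* \prec H(\theta^*)$ with $T, \mathbb{A}(T) \in M^*$, and for any $p \in \mathbb{A}(T) \cap M^*$ with $M = M^* \cap H(\theta) \in \pi_0(\mathcal{M}_p)$ — which we may arrange by passing to the largest extension $p^M$ adding $M$ and closing under intersection, exactly as in the proof of Claim \ref{working} — the pair $(p^M, t)$ is $(M^*, \mathbb{A}(T) \times T)$-generic for every $t \in T_{\delta_{M^*}}$. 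This suffices: by Lemma \ref{charact} (applied in $V$ to the proper forcing $\mathbb{A}(T)$, whose properness is Theorem \ref{properA} with $\beta = \theta$) this is equivalent to $\Vdash_{\mathbb{A}(T)} ``T$ is Souslin''.

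The natural approach is an induction along $Z \cup \{\theta\}$ showing the statement $(\ast)_\beta$: for $\beta \in Z \cup \{\theta\}$, countable $M^* \prec H(\theta^*)$ with $F, \beta, T \in M^*$, and $p \in \mathbb{A}(T)_\beta$ with $M = M^* \cap H(\theta) \in \pi_0(\mathcal{M}_p)$, the pair $(p, t)$ is $(M^*, \mathbb{A}(T)_\beta \times T)$-generic for all $t \in T_{\delta_{M^*}}$. The limit and trivial cases are handled as in Neeman's proof of Lemma 6.11; the crux is the successor-type step where $\beta$ is the next element of $Z$ above some $\alpha$, where one uses that $F(\alpha)$ is (forced to be) a proper poset \emph{preserving} $T$. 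Here one first applies the inductive hypothesis $(\ast)_\alpha$ to get that $(p)_\alpha$ together with any $t$ is $(M^*, \mathbb{A}(T)_\alpha \times T)$-generic; then, working below $\alpha$, one invokes Claim \ref{working} to know $\Vdash_{\mathbb{A}(T)_\alpha \cap H(\alpha)} ``T$ is Souslin'', so that in the extension by $\dot{G}_\alpha$ the poset $F(\alpha)$ preserves $T$ and the generic condition $w_p(\alpha)$ witnessing clause (5) of Definition \ref{PFAposet} produces, via Lemma \ref{charact}(2) applied \emph{inside} $V[\dot G_\alpha]$, that $(w_p(\alpha), t)$ is $(M[\dot G_\alpha], F(\alpha) \times T)$-generic. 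Composing the genericity below $\alpha$ with the genericity at $\alpha$ — the standard two-step argument, passing through the auxiliary set $E$ and its density closure $E_0 = E_0^{\leq} \cup E_0^{\perp}$ as in Claim \ref{working} — then yields $(\ast)_\beta$. Throughout, the key structural point is the one already exploited in Claim \ref{working}: by strong genericity of the pure side-condition part $\mathbb{M}_\theta^2$, a witness $(r,t)$ to compatibility of $(p,t)$ and $(q,h)$ restricts correctly to witnesses at each level $H(\alpha)$, and the finite models outside $M$ can be grafted on using Claim \ref{extension}.

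The main obstacle I anticipate is the bookkeeping at the successor step: one must simultaneously carry the tree coordinate $t$ through Neeman's model-surgery (splitting $\mathcal{M}_r$ as $\mathcal{M}_q \cup \{M\} \cup (\mathcal{M}_r \setminus M)$, extending $w$ via Claim \ref{extension}) and through the iteration coordinate, checking that the resulting $(w_e, t)$ still satisfies clause (5) at the new level $\alpha$ — and the verification of clause (5) is precisely where one needs $T$ to be Souslin in $V[\dot G_\alpha]$, hence where Claim \ref{working} is indispensable. The point emphasized in the introduction is that $T$ being a tree is never used: all that matters is the product-genericity formulation of Lemma \ref{charact}, so the argument is really a preservation schema, and the proof below will be organized to make the induction $(\ast)_\beta$ explicit and to isolate the successor step as the only place where the hypothesis ``$F(\alpha)$ preserves $T$'' is consumed.
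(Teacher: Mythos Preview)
Your plan is correct and matches the paper's proof almost exactly: induction on $\beta \in Z \cup \{\theta\}$, the successor step handled by passing to $V[G_\alpha]$, invoking Claim~\ref{working} to ensure $T$ is still Souslin there, applying Lemma~\ref{charact} to $(w_p(\alpha)[G_\alpha], t)$, running the $E_0 = E_0^{\leq} \cup E_0^{\perp}$ device, and then reassembling via the strong genericity of the side conditions; Claim~\ref{extension} is used in the limit step just as you indicate. The only point you underplay is the base case: the paper does not defer to Neeman's Lemma~6.11 but proves as a separate self-contained claim that $\mathbb{M}_\theta^2$ preserves $T$, using directly the $(M^*,T)$-genericity of $t$ coming from Lemma~\ref{Souslin} --- though this is the same $E/\bar E$ template and poses no difficulty.
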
 
\begin{proof}
We proceed by induction on $\beta$, proving that $\mathbb{A}(T)_\beta$ preserves $T$. 
If $\beta$ is the first element of $Z$, then $\mathbb{A}(T)_\beta = \mathbb{M}_{\theta}^2$. 

\begin{cla}
The forcing $\mathbb{M}_{\theta}^2$ preserves $T$.
\end{cla}
\begin{proof}
Let $M^* \prec H(\theta^*)$ be a countable model with $\theta^* > \theta$, containing $\mathbb{M}_{\theta}^2$ and $T$, and
let $\mathcal{M}_p \in \mathbb{M}_{\theta}^2$ be an $(M^*, \mathbb{M}_{\theta}^2)$-generic condition, with $M= M^* \cap H(\theta) \in \mathcal{M}_p$. Moreover, 
let $t \in T_{\delta_M}$, with $\delta_M = M \cap \omega_1$. Thanks to Lemma \ref{charact},
it is sufficient to show that $(\mathcal{M}_p, t)$ is an $(M^*, \mathbb{M}_{\theta}^2 \times T)$-generic condition. 

To this aim, let $D \in M^*$ be a dense subset of $\mathbb{M}_{\theta}^2 \times T$ and assume, by density of $D$, 
that $(\mathcal{M}_p, t) \in D$. Then define 
$$
E = \{h \in T : \exists \mathcal{M}_q \in \mathbb{M}_{\theta}^2 \mbox{ such that } (\mathcal{M}_q, h) \in D  \land  \mathcal{M}_p\cap M \subseteq \mathcal{M}_q \}.
$$
Since $\mathbb{M}_{\theta}^2, D, \mathcal{M}_p \cap M \in M^*$, we have $E \in M^*$. The set $E$ may not be dense in $T$
but 
$$
\bar{E} = \{\bar{h} \in T : \exists h \in E (\bar{h} \leq h) \lor \forall h \in E (\bar{h} \perp h)\}. 
$$
belongs to $M^*$ and it is dense in $T$. 

By $(M^*, T)$-genericity of $t$, there is an $\bar{h} \in \bar{E} \cap M$ that is compatible with $t$.
Moreover, since $(\mathcal{M}_p, t) \in D$, we have that $t \in E$. Since $t \in E$ and $\bar{h} \in \bar{E}$ are compatible, 
by definition of $\bar{E}$, there is $h \in E$, with $\bar{h} \leq h$. 
By elementarity pick such an $h$ in $M^*$.
Then, by definition of $E$, there is $\mathcal{M}_q \in \mathbb{M}_{\theta}^2$, with
$\mathcal{M}_p \cap M \subseteq \mathcal{M}_q $, such that $(\mathcal{M}_q, h) \in D$. By elementarity we can find $\mathcal{M}_q \in M^*$. Then, since $\mathcal{M}_p$ is $(M, \mathbb{M}_{\theta}^2)$-strong generic
and $\mathcal{M}_p \cap M \subseteq \mathcal{M}_q$, we have that $\mathcal{M}_p$ and $\mathcal{M}_q$ are compatible. Finally, $t$ and $\bar{h}$ are compatible because
$t \leq \bar{h}$ and $\bar{h} \leq h$. Hence $(\mathcal{M}_p, t)$ and $(\mathcal{M}_q, h)$ are compatible in $\mathbb{M}_{\theta}^2 \times T$ and
this compatibility, together with the fact that $(\mathcal{M}_q, h) \in D \cap M^*$, witnesses that $(\mathcal{M}_p, t)$ is $(M^*, \mathbb{M}_{\theta}^2 \times T)$-generic. 

\end{proof}

If $\beta$ is the successor of $\alpha$ in $Z$, then, by inductive hypothesis $\mathbb{A}(T)_\alpha$ preserves $T$.
In order to show that $\mathbb{A}(T)_\beta$ also preserves $T$, we use the characterization of Lemma \ref{charact}. 
Then, let $M^* \prec H(\theta^*)$ be a countable model, with $\theta^* > \theta$, containing $\beta, F$ and $T$. Notice that
$\mathbb{A}(T)_\beta$ is definable in $M^*$, with $\beta$ as a parameter. 
Moreover let $p \in \mathbb{A}(T)_\beta$ be an $(M^*, \mathbb{A}(T)_\beta)$-generic condition, with $M= M^* \cap H(\theta) \in \mathcal{M}_p$, 
and let $t \in T_{\delta_M}$, with $\delta_M = M \cap \omega_1$. Then we want to show that 
$(p, t)$ is an $(M^*, \mathbb{A}(T)_\beta \times T)$-generic condition. 

By elementarity of $M^*$, $\alpha \in M^*$. Now, fix a $V$-generic filter $G$ over $\mathbb{A}(T)_\alpha$, with 
$(p)_\alpha \in G$. By Theorem \ref{properA} $(p)_ \alpha$ is 
an $(M^*, \mathbb{A}(T)_\alpha)$-generic condition for $M^*$ and so $M^*[G] \cap V = M^*$.

If $H(\alpha) \notin \mathcal{M}_p$ and $p$ cannot be extended to a condition containing $H(\alpha)$, then
$\mathbb{A}(T)_\beta$, below $p$, is equivalent to $\mathbb{A}(T)_\alpha$. Then, forcing below $p$,
the conclusion follows by inductive hypothesis. Then, assume $H(\alpha) \in \mathcal{M}_p$. 

Let $G_\alpha =  G \cap H(\alpha).$
Then, by Theorem \ref{s-proper}, we have that $G_\alpha$ is a $V$-generic filter
 on $\mathbb{A}(T)  \cap H(\alpha)$, because $\mathbb{A}(T)_\alpha \cap H(\alpha) = \mathbb{A}(T)  \cap H(\alpha)$. 
Without loss of generality, 
we can assume $\Vdash_{\mathbb{A}(T) \cap H(\alpha)} ``F(\alpha)$ is a proper poset that preserves $T$'',
because, otherwise $\mathbb{A}(T)_\beta$ is equal to $\mathbb{A}(T)_\alpha$ and again the conclusion
follows by inductive hypothesis. 
Let $\mathbb{Q} = F(\alpha)[G_\alpha]$.
Then, by properness of $\mathbb{Q}$ in $V[G_\alpha]$, modulo extending $p$, we can assume $\alpha \in $dom$(w_p)$.

Fix $D \subseteq \mathbb{A}(T)_\beta \times T$ dense and in $M^*$. Without loss of generality assume $(p, t) \in D$.
Since we will work in $V[G_\alpha]$, we need to ensure that $\Vdash_{\mathbb{A}(T) \cap H(\alpha)} ``T$ is Souslin''. 
But this is true, by inductive hypothesis, as the Claim \ref{working}  shows.

Now, in $V[G_\alpha]$, define $E$ to be the set of couples $(u, h) \in \mathbb{Q} \times T$ for which there is a condition $(q, h) \in \mathbb{A}(T)_\beta \times T$ such that 
\begin{enumerate}
\item $w_q(\alpha)[G_\alpha] = u$,
\item $\mathcal{M}_p \cap M \subseteq \mathcal{M}_q$,
\item $ (q , h) \in D$, and 
\item $q \up H(\alpha) \in G_\alpha$.
\end{enumerate}

Notice that $E \in M^*[G_\alpha]$ and that $(w_p(\alpha)[G_\alpha], t) \in E$. The set $E$ may not be dense, but
if we define
$E_0 = E_0^{\leq} \cup E_0^{\perp}$, with
$$
E_0^\leq = \{(u_0, h_0) \in \mathbb{Q} \times T : \exists (u, h) \in E \mbox{ } (u_0, h_0) \leq (u, h)\}
$$ 
and
$$
E_0^\perp = \{(u_0, h_0) \in \mathbb{Q} \times T : \forall (u, h) \in E \mbox{ } (u_0, h_0) \perp (u, h)\},
$$
we have that $E_0$ is dense in $\mathbb{Q} \times T$. 
Moreover, notice that by elementarity $E_0$ is in $M^*[G_\alpha]$.

Now, since $M \in \pi_0(\mathcal{M}_p)$ and $\alpha \in M^* \cap H(\theta) = M$, we have that
$\Vdash_{\mathbb{A}(T) \cap H(\alpha)} ``w_p(\alpha)$ is an $(M^*[\dot{G}_\alpha], F(\alpha))$-generic condition'',
where $\dot{G}_\alpha$ is a $\mathbb{A}(T) \cap H(\alpha)$-name for $G_\alpha$. Moreover, 
$\Vdash_{\mathbb{A}(T) \cap H(\alpha)} ``F(\alpha)$ is a proper poset that preserves $T$'' and,
by inductive hypothesis and Lemma \ref{working}, $\Vdash_{\mathbb{A}(T) \cap H(\alpha)} ``T$ is Souslin''. 
Then by Lemma \ref{charact} applied in $V[G_\alpha]$ we have that
$(w_p(\alpha)[G_\alpha], t)$ is an $(M^*[G_\alpha], \mathbb{Q} \times T)$-generic condition.

Hence, there is a condition $(u_0, h_0) \in E_0 \cap M^*[G_\alpha]$ that is compatible with $(w_p(\alpha)[G_\alpha], t)$. 
Moreover, since $(w_p(\alpha)[G_\alpha], t) \in E$ we have that $(u_0, h_0) \in E_0^{\leq}$. This means that there is $(u, h) \in E$
such that $(u_0, h_0) \leq (u, h)$. By construction $(u, h)$ is compatible with $(w_p(\alpha)[G_\alpha], t)$ 
and by elementarity we can find such a condition in $M^*[G_\alpha]$.
Let $u_\alpha \in \mathbb{Q}$ be a witness of the compatibility between $w_p(\alpha)[G_\alpha]$ and $u$. Notice that $u_\alpha$ is an
$(N[G_\alpha], \mathbb{Q})$-generic condition for all $N \in \pi_0(\mathcal{M}_p)$, with $\alpha \in N$, because $u_\alpha \leq w_p(\alpha)[G_\alpha]$.
Since $(u, h) \in E$ there is a condition $q \in \mathbb{A}(T)_\beta$, 
with $\mathcal{M}_p \cap M \subseteq \mathcal{M}_q$ and  $w_q(\alpha)[G_\alpha] = u$,
such that $(q, h) \in D$. By elementarity let $q \in M^*[G_\alpha]$ and so $(q, h) \in  M^*[G_\alpha] \cap D$.
Since $M^*[G_\alpha] \subseteq M^*[G]$ and  $M^*[G] \cap V = M^*$, we have $(q, h) \in D \cap M^*$.
Now, by strong genericity of the pure side conditions, letting $\mathcal{M}_r$ be the closure under intersection
of $\mathcal{M}_p \cup \mathcal{M}_q$, we have that $\mathcal{M}_r$ witnesses that $\mathcal{M}_p$ and $\mathcal{M}_q$ are compatible. 
Moreover every model in $\pi_0(\mathcal{M}_r)$ above $\beta$ and outside $M$ are either models in $\mathcal{M}_p$ or of the 
form $N' \cap W$, where $N'$ is a model in $\pi_0(\mathcal{M}_p)$ and $W \in \pi_1(\mathcal{M}_q)$. 
Then $u_\alpha$ is an $(N[G_\alpha], \mathbb{Q})$-generic condition,
for all $N \in \pi_0(\mathcal{M}_r)$, with $\alpha \in N$, because of Remark \ref{iff} together with the fact that
 $u_\alpha$ extends both $w_p(\alpha)[G_\alpha]$ and $u$.

Finally, back in $V$, let $\dot{u}$ and $\dot{u}_\alpha$ be $\mathbb{A}(T)_\alpha \cap H(\alpha)$-names for
$u$ and $u_\alpha$. Moreover, let $e \in \mathbb{A}(T)_\alpha \cap H(\alpha)$ be
sufficiently strong to force all the properties we showed for $q$,  $\dot{u}$ and $\dot{u}_\alpha$.
We can also assume that $e$ extends both $q \up H(\alpha)$ and $p \up H(\alpha)$. 
Now notice that $\mathcal{M}_e \cup \mathcal{M}_r$ is already an $\in$-chain closed under intersection and so 
if $\mathcal{M}_s = \mathcal{M}_e \cup \mathcal{M}_r$ and $w_{s} = w_e \cup \{\alpha, \dot{u}_\alpha\}$, we have that
$s$ is a condition in $\mathbb{A}(T)_\beta$. 
Hence $(s, t)$ witnesses that $(p, t)$ and $(q, h)$ are compatible.

\smallskip
\smallskip
\smallskip

If $\beta$ is a limit point of $Z$, let again $M^* \prec H(\theta^*)$ be a countable model
containing $\mathbb{A}(T)_\beta$ and $F$. Then if $p \in \mathbb{A}(T)_\beta$,
with $M^* \cap H(\theta) = M \in \mathcal{M}_p$, and $t \in T_{\delta_M}$, with $\delta_M = M \cap \omega_1$,  
then, thanks to Lemma \ref{charact}, it is sufficient to show that $(p, t)$
is an $(M^*, \mathbb{A}(T)_\beta \times T)$-generic condition, in order to prove that $\mathbb{A}(T)_\beta$
preserves that $T$ is Souslin. 

To this aim, let $\bar{\beta} = sup(\beta \cap M^*)$ 
and let $\delta < \bar{\beta}$, in $Z \cap M^*$, be such that dom$(w_p) \subseteq \delta$. Moreover 
fix $D \in M^*$ dense in $\mathbb{A}(T)_\beta \times T$ and assume $(p, t) \in D$.


Now, define $E$  as the set of conditions $((q)_\delta, h) \in \mathbb{A}(T)_\delta \times T$ 
that extend  to conditions $(q, h) \in D$, 
with $\mathcal{M}_p\cap M \subseteq \mathcal{M}_q$. 
The set $E$ belongs to $M^*$, but it may not be dense in $\mathbb{A}(T)_\delta \times T$.
However the set $E_0 = E_0^{\leq} \cup E_0^{\perp}$ is dense in $\mathbb{A}(T)_\delta \times T$ and belongs to $M^*$; where
$$
E_0^{\leq} = \{( q_0, h_0) \in \mathbb{A}(T)_\delta \times T : \exists ( (q)_\delta, h) \in E \mbox{ such that } ( q_0, h_0)
\leq ( (q)_\delta, h)\},
$$ 
and 
$$
E_0^{\leq} = \{( q_0, h_0) \in \mathbb{A}(T)_\delta \times T : \forall ( (q)_\delta, h) \in E \mbox{  } ( q_0, h_0)
\perp ( (q)_\delta, h)\}.
$$

Then, by the inductive hypothesis, find a condition $(q_0, h_0) \in E_0 \cap M^*$
that is compatible with $( (p)_\delta, t)$. Moreover, since $( (p)_\delta, t) \in E$
and it is compatible with $(q_0, h_0)$, we have that $(q_0, h_0) \in E_0^{\leq}$.
Then, by definition of $E_0^{\leq}$, there is a condition $((q)_\delta, h) \in E$ such that 
$( q_0, h_0) \leq ( (q)_\delta, h)$ and, so, that is compatible with 
$( (p)_\delta, t)$. By elementarity pick such a condition in $M^*$.
Moreover, thanks the fact that $\mathcal{M}_p\cap M \subseteq \mathcal{M}_q$ and that $\mathcal{M}_p \cap M$ witnesses the $M$-strong
genericity of $\mathcal{M}_p$, we have that the compatibility between 
$( (p)_\delta, t) = ( (p)_\beta, t)$
and  $( (q)_\delta, h)$ is witnessed by a condition $\big{(}(\mathcal{M}_r, w_1) , t\big{)}$,
where $\mathcal{M}_r$ is the closure under intersection of $\mathcal{M}_p \cup \mathcal{M}_q$.
Then we have that $\mathcal{M}_r \cap M \setminus H(\beta) \subseteq \mathcal{M}_q$, and that every model in $\pi_0(\mathcal{M}_r)$ 
above $\beta$ and outside $M$ are either models in $\mathcal{M}_p$ or of the form $N' \cap W$, where $N'$ is a model in $\pi_0(\mathcal{M}_p)$
and $W \in \pi_1(\mathcal{M}_q)$.

Now, let $( q, h) \in D$ witness that $((q)_\delta, h) \in E$. 
By elementarity, we can find $( q, h) \in D\cap M^*$. Then,  thanks to the fact that $\mathcal{M}_r \cap M \setminus H(\beta) \subseteq \mathcal{M}_q$
we can apply Claim \ref{extension} and find a function
$w_{2}$, extending $w_1$, defined as dom$(w_2) = $dom$(w_1) \cup ($dom$(w_q) \setminus \delta)$, 
such that $\big{(} (\mathcal{M}_r, w_2), t\big{)}$ extends $( q, h)$.
Setting $w_{r} = w_2 \cup w_p\restriction [\bar{\beta}, \beta)$, we claim that $r$ belongs to
$\mathbb{A}(T)_\beta$. 

In order to show that this latter claim holds, it is sufficient to show that if $\alpha \in $dom$(w_p)\up[\bar{\beta}, \beta)$,
then $p \up H(\alpha)$ forces that $w_r(\alpha) = w_p(\alpha)$ is an $(N[\dot{G}_\alpha], F(\alpha))$-generic condition, 
where $\dot{G}_\alpha$ is the canonical name for a $V$-generic filter over $\mathbb{A}(T) \cap H(\alpha)$ and $N \in \pi_0(r)$, with $\alpha \in N$.
Notice that $\alpha \in N$ implies $N \notin M$. 
Then, since $p$ is a condition, the claim follows thanks to Remark \ref{iff} and the fact that every model in $\pi_0(\mathcal{M}_r)$ 
above $\beta$ and outside $M$ are either models in $\mathcal{M}_p$ or of the form $N' \cap W$, where $N'$ is a model in $\pi_0(\mathcal{M}_p)$.
 
Hence, finally we have that $(r, t)$ belongs to
$\mathbb{A}(T)_\beta \times T$ and that, by construction, it
extends both $(q, h)$ and $(p, t)$.

\end{proof}

\section{Conclusions}
As stated in the introduction, Theorem \ref{preservation} could be generalized to other forcings that 
admit a formulation similar to Lemma \ref{charact}. Indeed the argument patterns of all new results of 
this paper are similar and in proving them we did not use essential properties of a tree $T$, except the
characterization of Lemma \ref{charact}. More formally, given a proper forcing $\mathbb{P}$ we could define the 
following property for a forcing $\mathbb{Q}$.

\begin{quote}
$(*)_{[\mathbb{P},\mathbb{Q}]}$: the forcing $\mathbb{Q}$ is proper and if $M$ is a countable elementary substructure of 
$H(\theta)$, for $\theta$ sufficiently large such that $\mathbb{P}$, $\mathbb{Q} \in M$, then if 
$p$ is an $(M, \mathbb{P})$-generic condition and $q$ is an $(M, \mathbb{Q})$-generic condition, then 
$(p, q)$ is an $(M, \mathbb{P} \times \mathbb{Q})$-condition.
\end{quote}

Then Theorem \ref{preservation} shows that $(*)_{[\mathbb{A}(T)_{\alpha}, T]}$ holds, for every $\alpha \in Z \cup \{\theta\}$. 
Notice that the forcing $\mathbb{A}(T)$ is not, properly speaking, an iteration.
Hence it is not fully correct to say that the property ``$T$ is Souslin'' is preserved under finite support iteration. 
However, we think that understang the pure side conditions in
terms of a real iteration would allow to extend the class of properties 
for which these preservation results hold.


\begin{thebibliography}{99}
\bibitem{Miyamoto} Tadoshi Miyamoto: ``$\omega_1$-Souslin trees under countable support iterations''. \emph{Fundamenta Mathematicae}, 143, pp. 257--261, 1993.
\bibitem{Neeman2} Itay Neeman: ``Forcing with sequences of models of two types''. Preprint
\bibitem{Todo} Stevo Todor\v{c}evi\'{c}: ``Forcing with a coherent souslin tree''. Preprint
\bibitem{VelVen2} Boban Veli\v{c}kovi\'{c} and Giorgio Venturi: ``Proper forcing remastered''. In \emph{Appalachian Set Theory} (Cummings, Schimmerling, eds.), LMS lecture notes series 46, pp. 231--261, 2012. 
\end{thebibliography}

\end{document}